 \newtheorem{theorem}{Theorem}
 \newtheorem{lemma}{Lemma}
 \newtheorem{prop}{Proposition}
 \theoremstyle{definition}
 \newtheorem{definition}{Definition}
 \newtheorem{remark}{Remark}
\renewcommand{\Re}{\operatorname{Re}}
\renewcommand{\Im}{\operatorname{Im}}
\DeclareMathOperator{\supp}{supp}
\newcommand{\rmi}{\mathrm{i}}
\newcommand{\calO}{\mathcal{O}}
\newcommand{\R}{\mathbb{R}}
\newcommand{\C}{\mathbb{C}}
\newcommand{\calEO}{\mathcal{E}(\Omega)}
\numberwithin{equation}{section}
\begin{document}

\title{Analytic properties of heat equation solutions and reachable sets}

\author{Alexander Strohmaier}
\address{School of Mathematics\\ University of Leeds\\ Leeds\\ Yorkshire\\ LS2 9JT, UK}
\email{a.strohmaier@leeds.ac.uk}
\author{Alden Waters} 
\address{Bernoulli Institute\\ Rijksuniversiteit Groningen\\ Nijenborgh 9\\ 9747 AG Groningen\\ Netherlands}
\email{a.m.s.waters@rug.nl}

\keywords{}
\thanks{}

\date{\today}
 
\begin{abstract}
There recently has been some interest in the space of functions on an interval satisfying the heat equation for positive time in the interior of this interval. Such functions were characterised as being analytic on a square with the original interval as its diagonal. In this short note we provide a direct argument that the analogue of this result holds in any dimension. For the heat equation on a bounded Lipschitz domain $\Omega \subset \R^d$  at positive time all solutions are analytically extendable to a geometrically determined subdomain $\mathcal{E}(\Omega)$  of $\mathbb{C}^d$ containing $\Omega$. 
This domain is sharp in the sense that there is no larger domain for which this is true. If $\Omega$ is a ball we prove an almost converse of this theorem. Any function that is analytic in an open neighbourhood of $\mathcal{E}(\Omega)$ is reachable in the sense that it can be obtained from a solution of the heat equation at positive time. This is based on an analysis of the convergence of heat equation solutions in the complex domain using the boundary layer potential method for the heat equation. The converse theorem is obtained using a Wick rotation into the complex domain that is justified by our results. This gives a simple explanation for the shapes appearing in the one-dimensional analysis of the problem in the literature. It also provides a new short and conceptual proof in that case. 
\end{abstract}

\maketitle

\section{Introduction and background}
The questions of control and reachability for the heat equation have a long history. Null controllability of the heat equation has been extensively researched since the $70's$. The one dimensional case was closely examined in the pioneering work of \cite{HO} using biorthogonal families. Sharp characterisations of null-controllability were obtained in the $d$-dimensional case using elliptic Carleman estimates \cite{lr} or parabolic Carleman estimates \cite{FO}, c.f. also \cites{EZ, rosier, zuazua, DZZ, CZ}. 

By contrast, much less is known about exact controllability of the heat equation. Theorems in \cites{MRR, SD, HT} contain characterisations of the reachable set for the one dimensional case in terms of analytic functions. For arbitrary domains (non-empty open connected sets) $\Omega$ in $\mathbb{R}^d$ the question of characterisation of the reachable set, here denoted as $\mathcal{R}_{\Omega}$, remains an open question. We seek to answer the question ``what are the properties of $\mathcal{R}_{\Omega}$ when the domain $\Omega$ is a bounded subdomain of $\mathbb{R}^d$?"

\subsection{The reachable set $\mathcal{R}_\Omega$}

For an open bounded Lipschitz domain $\Omega \subset \R^d$ we consider the heat equation 
\begin{align}
&\partial_tu =\Delta u \quad \mathrm{in} \quad [0,T]\times \Omega \nonumber \\& \label{INg}
u|_{t=0}=u_0\quad \mathrm{in} \quad \Omega \\& \nonumber
u|_{[0,T] \times \partial\Omega}=h \nonumber
\end{align}
with initial data $u_0$ and boundary data $h$. This problem is well posed in various function spaces. For the sake of concreteness we take
initial data $u_0 \in H^1_0(\Omega)$
on the time interval $[0,T]$, and  with $u \in H^{1,\frac{1}{2}}([0,T]\times \Omega)$ and  $h \in H^{\frac{1}{2},\frac{1}{4}}([0,T]\times \partial \Omega)$ in certain mixed Sobolev spaces that are described in
section \ref{thermal}. As discussed in Section \ref{thermal} this problem is well-posed (Prop. \ref{wpwp}) and since the heat operator is hypoelliptic the solution is necessarily in $C^\infty((0,T] \times  \Omega)$.
Generically, the set
\begin{align*}
\mathcal{R}_{\Omega}(T,u_0)=\{ v \in C^\infty(\Omega) \mid \, v(x) = u(T,x)\, \textrm{where } u(t,x) \textrm{ solves} \; \eqref{INg} \textrm{ for some } h \in H^{\frac{1}{2},\frac{1}{4}} \}
\end{align*}
is referred to as the {\sl reachable set}. 
By null-controllabilty of the heat equation with boundary controls we have
$\mathcal{R}_{\Omega}(T,u_0) = \mathcal{R}_{\Omega}(T,0)$ and that $\mathcal{R}_{\Omega}(T,0)$ does not depend on $T>0$.
Indeed, a nonzero function $u_0(x)$ with zero Dirichlet boundary conditions is controllable to $0$ after any finite time $T$ and can therefore be subtracted off from the problem \label{IN} by linearity. Boundary null controllability for the heat equation for smooth domains can be found in \cites{rosier,lr}. In \cite{apraiz} null controllability for the heat equation on certain Lipschitz domains (including smooth domains) using $L^{\infty}$ boundary controls is established, and also in the older \cite{FO} for $C^2$ domains. The paper \cite{CZ} also explains how the results from \cite{FO} provide some subspaces of the reachable space, in a close spirit to \cite{HO}.
We remark here that boundary null controllability for the heat equation with
$H^{\frac{1}{2},\frac{1}{4}}([0,T] \times \partial \Omega)$ boundary controls is an easy consequence of null-controllability with $L^\infty$-control on a slightly larger open domain that contains the closure of $\Omega$. Indeed, let $\Omega'$ be an open set with smooth boundary such that $\Omega \Subset \Omega'$. Extending the initial value by zero to $\Omega'$ we can find 
boundary controls for $\Omega'$ such that the solution vanishes at $t=T$ and with boundary controls that are smooth near $t=0$. The latter can always be achieved by solving the heat equation using the heat kernel of $\R^d$ for small times, then using $L^\infty$-boundary controls for the remaining time interval.
The restriction to $\Omega$ then gives a solution on $Q$ that vanishes at $t=T$ with boundary controls that are in  $H^{\frac{1}{2},\frac{1}{4}}([0,T] \times \partial \Omega)$ (see Section \ref{thermal}). The so constructed boundary controls in fact exhibit much higher regularity, but we will not discuss this here.

Therefore the following definition makes sense.

\begin{definition} \nonumber
 Let $\Omega$ be a bounded Lipschitz domain in $\R^d$.
 The {\sl reachable set} $\mathcal{R}_{\Omega} \subset C^\infty(\Omega)$ 
 is defined as $\mathcal{R}_{\Omega}(T,0)$ for some (and hence for all) $T>0$.
\end{definition}
\noindent
Due to the linearity of the problem, the reachable set $\mathcal{R}_{\Omega}$ is a vector space.
  \\

Our goal is to describe the analytic properties of the reachable set $\mathcal{R}_{\Omega}$ 
 for bounded Lipschitz domains $\Omega\subset\mathbb{R}^d$.
 
 \subsubsection{Choice of function space}
  In the literature several other function spaces to pose the initial-boundary-value problem \eqref{INg} have been used. 
  From a modern partial differential equation perspective and for smooth domains the most natural choice is an adapted scale of mixed Sobolev spaces. For Lipschitz domains the scale needs to be restricted and for the sake of definiteness we have chosen to follow the very natural choice of \cite{costabel}, c.f. also \cite{costabel2,costabel3,costabel4, mitrea1,mitrea2, mitrea} for the theory of these spaces in relationship to boundary layer theory for low regularity domains. This space also has the advantage that the relevant estimates for thermal layer potential theory are well established. This theory provides a very explicit description of the analytic continuation of solutions of the heat equation (c.f. our proof of Prop. \ref{main1}) and we believe this observation to be interesting in its own right.
  
  Obviously other function spaces result in a different definition of the reachable set. Our main results are however robust in that they actually do not depend on the choice of function class in the problem setup as long as well-posedness and null-controllability hold and the function spaces contain the set of smooth functions (see Remark \ref{function-spaces}).\\ 
  
We now describe what is known for the one dimensional heat equation on an interval.
\subsection{The example of the heat equation on an interval}
For a finite time $T$, the one dimensional heat equation as a control problem can be stated as: 
\begin{align}\label{IN1}
&\partial_t u(t,x)-\partial_{x}^2u(t,x)=0\quad x\in (0,1) \,\, t\in [0,T]\\& \nonumber
u(t,0)=h_0(t) \quad u(t,1)=h_1(t) \quad t\in [0,T] \\& \nonumber
u(0,x)=u_0(x) \quad x\in (0,1) \nonumber
\end{align}
with $u_0\in L^2(0,1)$ and $h_0(t),h_1(t)\in L^2(0,T)$. Here equality for $L^2$ functions is understood in the usual sense as an almost everywhere equality.

A function $u$ is reachable if there exists two control inputs $h_0(t),h_1(t)$ in $L^2(0,T)$ such that the solution satisfies
\begin{align*}
u(T,x)=u_1(x) \quad \textrm{a.e. for } \,x\in (0,1)
\end{align*}
and $u(0,x)=u_0(x)$. The operator $Au=u''$ with Dirichlet boundary conditions has domain $D(A)=H^2(0,1)\cap H_0^1(0,1)\subset L^2(0,1)$. Naturally we have that 
\begin{align*}
e_n(x)=\sqrt{2}\sin(n\pi x) \quad n\geq 1
\end{align*}
with the set  $\{e_n(x)\}_{n\in\mathbb{N}}$ an orthonormal basis in $L^2(0,1)$ consisting of eigenfunctions of $A$. Decomposing a given $u\in D(A)$ as 
\begin{align*}
u(x)=\sum\limits_{n=1}^{\infty}c_ne_n(x)=\sum\limits_{n=1}^{\infty}\sqrt{2}c_n\sin(n\pi x)
\end{align*}
then it is known as a result of \cite{HO} that $u$ is necessarily reachable if we have for some $\epsilon>0$ 
\begin{align}\label{1dfourier}
\sum\limits_{n=1}^{\infty}\frac{|c_n|}{n}\exp((1+\epsilon)n\pi)<\infty. 
\end{align}
Unfortunately this last condition implies that $u$ and all of its odd derivatives vanish at $0$ and $1$. This condition is not natural and for instance excludes polynomial functions. 
For example, functions of the form 
\begin{align*}
\frac{c_d\alpha (2\pi)^{\frac{d+2}{2}}}{(4\pi^2\alpha^2+|x|^2)^{\frac{d+1}{2}}}, \quad\quad c_d=\frac{\Gamma(\frac{d+1}{2})}{\pi^{\frac{d+1}{2}}}
\end{align*}
 satisfy the conditions in Section 3. (For this function the case $d=1$ was already covered by results in \cite{MRR}). 
 
\section{Statement of the results}
 
We now need the definition of the subsets of $\mathbb{C}^d$ over which we are extending our solutions. 
\begin{definition}
For $\Omega$ an open and bounded Lipschitz domain in $\mathbb{R}^d$ we let $\mathcal{E}(\Omega)$ denote the set
\begin{align*}
&\mathcal{E}(\Omega)=\left\{ z=x+iy \in \C^d\;| \; x \in \Omega,\; |y| <\mathrm{dist}(x,\partial\Omega), y\in \mathbb{R}^d \right\}.
\end{align*}
\end{definition}

\begin{figure}[h!]
 \begin{center}
 \includegraphics*[width=7cm]{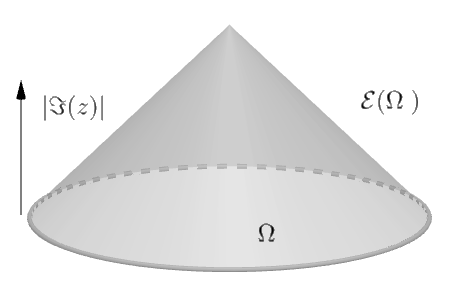}
 \end{center}
 \caption{Illustration of the domain $\mathcal{E}(\Omega)$ when $|\Im(z)|$ is projected onto the vertical axis. In one dimension this is usually depicted as a double cone, but one should note that this picture would be misleading in higher dimensions because then $\mathcal{E}(\Omega) \setminus \R^d$ is connected when $\Omega$ is.}
\end{figure}

Of course $\calEO$ is the pre-image of the positive part of the usual domain of dependence $D^+(\Omega) \subset \R^{d+1}$ of the wave equation on $d+1$-dimensional Minkowski space under the map $\C^d \to \R^{d+1}, z = x + \rmi y \mapsto (x,|y|)$. It can be thought of as a ball bundle over $\Omega$ where the radius of the ball over the point $x \in \Omega$ is given by the distance of $x$ to $\partial \Omega$.

\noindent  \emph{Notations:} 
\begin{itemize}
\item For an open subset $U \subset \C^d$ the set of holomorphic functions on $U$ is denoted by $\calO(U)$. We endow it with the topology of uniform convergence on compact subsets of $U$.
\item For a subset $E \subset \C^d$ that is the closure of an open set we denote by $\calO(E)$ the set 
$$\calO(E)=\bigcup_{U \supset E, U \textrm{ open}} \calO(U).$$
\item A function $f \in \calO(\mathcal{E}(\Omega))$ is completely determined by its restriction to $\Omega$ and therefore we think of the set of analytic functions $\calO(\mathcal{E}(\Omega))$ as a subset of $C^\infty(\Omega)$ without further mention.
\end{itemize}

Notice in dimension 1 for $\Omega=(-L,L)$ this set $\calEO$ coincides with the definition of the set in \cite{SD}, which is a square in the complex plane $\mathbb{C}$ with $(-L,L)$ as one of its diagonals.  
The criterion in the above definition are found in Theorem 3.1.12 in \cite{hormander}. 

The first result we prove here is stated as
\begin{prop}\label{main1}
If $\Omega$ is a bounded Lipschitz domain, then $\mathcal{R}_{\Omega} \subset  \calO(\mathcal{E}(\Omega))$. The domain $\mathcal{E}(\Omega)$  is optimal in the following sense.  For any $p \in \C^d \setminus \overline{\mathcal{E}(\Omega)}$ there exists a function $u \in \mathcal{R}_{\Omega}$ which does not have an analytic extension to a connected open set containing $p$ and $\Omega$.
 \end{prop}
This thus establishes that $\calEO$ is the optimal domain all reachable functions can be extended to as a holomorphic function. 
The fact that solutions to the heat equation extend holomorphically can for example be found for more regular domains in \cite{John}*{p.219} and a simple modification of this using cutoff functions and restrictions shows that this is true for Lipschitz domains. We give here a proof which is based on thermal boundary layer theory which actually gives a more precise representation of the solution, and which may be useful for more precise investigations. In particular, in one dimension this gives rise to completely explicit expressions (see Section \ref{1dsection}). 

Our main theorem is the almost converse theorem for special geometries.
We then have: 
\begin{theorem}\label{main2}
 Suppose that $\Omega = B_R(x_0)$ is a ball. Then
 \begin{align*}
   \calO\left(\overline{\calEO}\right) \subset \mathcal{R}_{\Omega} \subset  \calO(\mathcal{E}(\Omega)).
\end{align*} 
\end{theorem}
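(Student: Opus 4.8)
The right-hand inclusion is Theorem~\ref{main1}, so the content is $\calO(\overline{\calEO})\subseteq\mathcal{R}_\Omega$ for $\Omega=B_R(x_0)$. I would start from two observations. First, for a ball $\overline{\calEO}=\{x+\rmi y : |x-x_0|+|y|\le R\}$ is compact, so a function holomorphic on an open neighbourhood of it is automatically holomorphic on $\{x+\rmi y : |x-x_0|+|y|<R+\delta\}=\mathcal{E}(B_{R+\delta}(x_0))$ for some $\delta>0$; thus one may assume $f\in\calO(\mathcal{E}(\Omega'))$ with $\Omega'\Supset\Omega$ a concentric ball --- this ``room to spare'' is exactly what separates the almost-converse from the (generally false) equality with $\calO(\calEO)$. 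Second, since $\mathcal{R}_\Omega=\mathcal{R}_\Omega(T,0)$ for every $T>0$, one gets the elementary inclusion $e^{\sigma\Delta_\Omega}L^2(\Omega)\subseteq\mathcal{R}_\Omega$ for each $\sigma>0$ (run $r\mapsto e^{(r+\sigma-T)\Delta_\Omega}\psi$ on $[0,T]$ with $0<T<\sigma$: a heat trajectory with $H^1_0$ initial datum, zero boundary datum, and value $e^{\sigma\Delta_\Omega}\psi$ at time $T$), and hence that \emph{any} smooth heat trajectory on $[0,T]\times\Omega$ with $L^2$ initial datum has its terminal value in $\mathcal{R}_\Omega$ (decompose it into the Dirichlet heat flow of the initial datum, which lands in $e^{(T/2)\Delta_\Omega}L^2\subseteq\mathcal{R}_\Omega$, plus the zero-initial-datum trajectory carrying the same boundary datum, which is in $\mathcal{R}_\Omega$ by definition). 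So it suffices to produce, for some $T>0$, a genuine heat trajectory on $[0,T]\times\Omega$ with $L^2$ initial datum whose value at time $T$ is $f|_\Omega$.

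The construction of such a trajectory is the Wick rotation. The subordination formula $e^{-t\Delta}=\tfrac{1}{\sqrt{4\pi t}}\int_\R e^{-\sigma^2/4t}\cosh(\sigma\sqrt{-\Delta})\,\der\sigma$ expresses the (formal) backward heat flow as a Gaussian average of the propagator of the \emph{elliptic} equation $\partial_\sigma^2 v+\Delta_x v=0$; applied to $f$ this superposes the harmonic extension $v(\sigma,x)$ of $f$ in one additional variable, i.e.\ the solution of $\partial_\sigma^2 v+\Delta_x v=0$, $v(0,\cdot)=f$, $\partial_\sigma v(0,\cdot)=0$. Here the earlier results enter: the complex-domain convergence analysis underlying Theorem~\ref{main1} --- together with the fact that $\calEO$ is the preimage of the wave domain of dependence $D^+(\Omega)$ --- shows that holomorphy of $f$ on $\mathcal{E}(\Omega')$ forces $v$ to be real-analytic on a neighbourhood of $\{(\sigma,x):x\in\overline\Omega,\ |\sigma|\le\rho\}$ for some $\rho>0$, with each slice $v(\sigma,\cdot)$ extending holomorphically beyond $\overline{\calEO}$ once $\rho$ is small enough. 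I would then put
\[
u(t,x):=\frac{1}{\sqrt{4\pi(T-t)}}\int_{-\rho}^{\rho}e^{-\sigma^2/(4(T-t))}\,v(\sigma,x)\,\der\sigma ,\qquad 0\le t<T,\ x\in\overline\Omega .
\]
Two integrations by parts in $\sigma$, using $\partial_\sigma^2 v=-\Delta_x v$ and the parity of $v$, show that $u$ solves the heat equation up to a source $G$ coming from the endpoints $\sigma=\pm\rho$, namely a fixed multiple of $(T-t)^{-3/2}e^{-\rho^2/4(T-t)}\bigl[\tfrac{\rho}{T-t}v(\rho,\cdot)+2\,\partial_\sigma v(\rho,\cdot)\bigr]$; moreover $u$ extends smoothly to $t=T$ with $u(T,\cdot)=f|_\Omega$ (the difference from the formal $e^{-(T-t)\Delta}f$ being exponentially small as $t\to T$), $u$ is $C^\infty$ up to $\partial\Omega$ and up to $t=0$, $G(t,\cdot)$ is holomorphic beyond $\overline{\calEO}$, and $\sup_{t\in[0,T]}\|G(t,\cdot)\|$ is as small as one likes once $T$ is small.

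It remains to pass from this ``approximate'' trajectory to an exact reachable function. Decomposing $u$ by linearity, $f=u(T,\cdot)=e^{T\Delta_\Omega}u(0,\cdot)+B+C$, where $B\in\mathcal{R}_\Omega$ is the terminal value of the zero-initial-datum trajectory carrying the boundary datum $u|_{[0,T]\times\partial\Omega}$, where $e^{T\Delta_\Omega}u(0,\cdot)\in e^{(T/2)\Delta_\Omega}L^2\subseteq\mathcal{R}_\Omega$ by the first paragraph, and where $C=\int_0^T e^{(T-r)\Delta_\Omega}G(r,\cdot)\,\der r$. The part of $C$ from $r\in[0,T/2]$ again lies in $e^{(T/2)\Delta_\Omega}L^2\subseteq\mathcal{R}_\Omega$, and one is reduced to the tail $\int_{T/2}^T e^{(T-r)\Delta_\Omega}G(r,\cdot)\,\der r$ --- the terminal value of an inhomogeneous heat trajectory whose source is exponentially localised at $r=T$ and holomorphic beyond $\overline{\calEO}$ --- which one handles by iterating the whole (linear) construction and summing the resulting reachable increments; being realised by heat trajectories on the fixed interval $[0,T]$ with summable norms, these assemble into a single heat trajectory reaching $f$.

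I expect this last step to be the crux: mere smallness of the error is useless because $\mathcal{R}_\Omega$ is not closed, so the iteration must be arranged to produce an honest trajectory, and this requires sharp quantitative control of the Dirichlet heat evolution acting on functions holomorphic near $\overline{\calEO}$ --- precisely the complex-domain estimates that are the technical heart of the paper's layer-potential analysis. The reason the argument runs for a ball and not for a general Lipschitz $\Omega$ is that these estimates are then available in explicit form: the Dirichlet eigenfunctions of $B_R(x_0)$ are products of entire Bessel functions with harmonic polynomials, with sharp growth in the complex directions, and $\calEO=\{|x-x_0|+|y|<R\}$ is a rotationally symmetric $\ell^1$-type region for which the one-variable subordination can be carried out with quantitative bounds; in general the converse inclusion itself is not expected to hold.
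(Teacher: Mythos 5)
Your overall strategy --- realizing $f$ as the time-$T$ value of a heat trajectory by Gaussian subordination against an elliptic extension $v(\sigma,x)$ in one extra variable --- is close in spirit to the paper's Wick-rotation proof, and your preliminary reductions (shrinking to $f\in\calO(\mathcal{E}(\Omega'))$ with $\Omega'\Supset\Omega$; smooth trajectories with $L^2$ initial data have reachable terminal values) are correct and useful. But the argument as written has a genuine gap, and you have located it yourself: truncating the subordination integral to $[-\rho,\rho]$ produces a source term, and the terminal value of a source-driven trajectory is not known to lie in $\mathcal{R}_\Omega$, which is defined through boundary controls alone. Smallness of the source does not help (as you note, $\mathcal{R}_\Omega$ is not closed), and the proposed iteration is not carried out; worse, it is not clear it can close, because feeding the tail $\int_{T/2}^{T}e^{(T-r)\Delta_\Omega}G(r)\,\der r$ back into the construction requires it to be holomorphic on an \emph{open neighbourhood} of $\overline{\calEO}$, whereas applying the Dirichlet semigroup is only guaranteed (by Theorem \ref{main1}) to return holomorphy on $\mathcal{E}(\Omega)$; each pass therefore threatens to shrink the domain of holomorphy below what the next pass needs. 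A second, more fillable, gap is the existence and analyticity of the elliptic Cauchy extension $v=\cosh(\sigma\sqrt{-\Delta})f$ on $[-\rho,\rho]\times\overline{\Omega}$: this is plausible for $f$ holomorphic on a complex neighbourhood of $\overline{\Omega}$ of imaginary width exceeding $\rho$, but it does not follow from Theorem \ref{main1} (which concerns forward heat solutions), and it needs its own proof.

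The paper's proof implements the same Wick-rotation idea in a way that avoids both problems. Instead of one auxiliary variable and a truncated Gaussian average, it uses all $d$ imaginary directions: for $\Omega=B_R(0)$ one has $\rmi\,\calEO=\calEO$, so the purely imaginary slice of the domain of holomorphy is again a copy of $\Omega$; one applies the free heat semigroup in $y$ to the compactly supported function $(u\chi)(0,y)$, and the resulting $\phi_t$ is entire in $y$ for $t>0$, so its holomorphic extension $u_t(x,y)$ (via the Cauchy--Riemann equations) satisfies $(\partial_t+\Delta_x)u_t=0$ \emph{exactly} --- no endpoint or source terms appear, because the integration is over all of $\R^d$. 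The only delicate point is then the convergence $u_t\to u$ as $t\to 0_+$ on $\overline{\calEO}$, which is precisely Theorem \ref{main3} (proved by a contour shift), after which reversing time exhibits $u|_\Omega$ as the terminal value of an honest boundary-controlled forward trajectory. To salvage your route you would either have to remove the truncation (impossible, since $v$ exists only for $|\sigma|\le\rho$) or prove a separate lemma that terminal values of trajectories with analytic sources are reachable; the paper's choice of contour makes both unnecessary. Finally, the reason a ball is needed is not the explicit Bessel-function eigenbasis you invoke but exactly the invariance $\rmi\,\calEO=\calEO$, which you brush against in your closing remark without using.
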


The proof of Theorem  \ref{main2} is obtained by ``Wick rotation" which transforms the forward heat equation into the backward heat equation.
\begin{remark}
By Hartogs' extension theorem the statement of Theorem \ref{main2} can be strengthened in dimensions $d \geq 2$ to the statement 
$\calO\left( U \right) \subset \mathcal{R}_{\Omega}$ for any connected open neighbourhood $U$ of $\partial \calEO$. The set $\partial \calEO$ is connected as it contains a sphere bundle over $\Omega$ as a dense set.
\end{remark}

\begin{remark} \label{function-spaces}
 By a simple enlargement argument of Proposition \ref{main1} and Theorem \ref{main2} also hold for boundary controls in other function classes. The reason is that elements in $\calO\left(\overline{\calEO}\right)$ are reachable by a solution of the heat equation on a slightly larger domain and therefore the boundary controls on the smaller domain are automatically smooth. Hence, elements in $\calO\left(\overline{\calEO}\right)$ are reachable with smooth initial values and smooth boundary controls. Similarly, if $u$ solves the heat equation with less regular boundary controls and initial data, one can still show that $u(T)$ belongs to $\calO(\mathcal{E}(\Omega))$. This follows trivially from our results by making the domain slightly smaller. This characterisation is therefore very robust with respect to choices of function spaces.
 \end{remark}

The main technical theorem may be interesting in its own right.
\begin{theorem}\label{main3}
 Assume that $u(t,\cdot)$ is a solution of the heat equation \eqref{INg} with $u_0 \in \calO(\mathcal{E}(\Omega))$. Then 
 $u(t,\cdot)$ converges to $u_0$ as $t \to 0_+$ uniformly on compact subsets of $\mathcal{E}(\Omega)$.
\end{theorem}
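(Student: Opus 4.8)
The plan is to work compact set by compact set: fix a compact $K\subset\mathcal{E}(\Omega)$, pass to a smooth subdomain $\Omega'\Subset\Omega$ on which $K$ still sits compactly inside $\mathcal{E}(\Omega')$ with a definite margin, and represent $u$ near $\overline{\Omega'}$ by thermal layer potentials; this reduces the assertion to an approximate identity statement \emph{in the complex domain} together with two lower order contributions that tend to $0$ as $t\to 0_+$. Concretely, since $K$ is compact in the open set $\mathcal{E}(\Omega)$ there is $\epsilon>0$ with $\Re z\in\Omega_{10\epsilon}$ and $|\Im z|<\dist(\Re z,\partial\Omega)-10\epsilon$ for every $z\in K$, where $\Omega_\delta:=\{x\in\Omega:\dist(x,\partial\Omega)>\delta\}$. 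Choosing a smooth bounded $\Omega'$ with $\Omega_{3\epsilon}\subset\Omega'\subset\Omega_{2.5\epsilon}$, the fact that $\dist(\cdot,\partial\Omega)$ is $1$-Lipschitz yields $\overline{\Omega'}\subset\Omega$, $\dist(\xi,\partial\Omega)\le 3\epsilon$ for $\xi\in\partial\Omega'$, and $K\Subset\mathcal{E}(\Omega')$ with $\dist(\Re z,\partial\Omega')\ge|\Im z|+5\epsilon$ on $K$. Since $u_0\in\calO(\mathcal{E}(\Omega))$ is real analytic on $\Omega$, $u_0|_{\overline{\Omega'}}$ is continuous; and because $u$ attains the initial datum $u_0$ in $L^2_{\mathrm{loc}}(\Omega)$, a standard cutoff/Duhamel argument (interior parabolic regularity) upgrades this to $u(t,\cdot)\to u_0$ uniformly on $\overline{\Omega'}$. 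In particular $\rho(t,\cdot):=u(t,\cdot)|_{\partial\Omega'}-u_0|_{\partial\Omega'}$ obeys $\|\rho\|_{L^\infty([0,t]\times\partial\Omega')}\to 0$ as $t\to 0$.

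Next I would decompose $u|_{(0,T)\times\Omega'}=w_1+w_2+e$, where
\[
  w_1(t,x)=\int_{\Omega'}p_t(x,y)\,u_0(y)\,\der y,\qquad p_t(x,y)=(4\pi t)^{-d/2}e^{-|x-y|^2/4t},
\]
$w_2=\mathcal{S}\psi$ and $e=\mathcal{S}\widetilde\psi$ are thermal single layer potentials over $[0,T]\times\partial\Omega'$, with $\psi$ solving the boundary integral equation $\mathsf{V}\psi=u_0|_{\partial\Omega'}-w_1|_{\partial\Omega'}$ (so that $w_1+w_2$ solves the heat equation on $\Omega'$ with data $u_0$) and $\widetilde\psi$ solving $\mathsf{V}\widetilde\psi=\rho$. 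Classical thermal layer potential theory on Lipschitz domains gives $\psi$ bounded uniformly for small time and $\|\widetilde\psi\|_{L^\infty([0,t]\times\partial\Omega')}\to 0$. Each of $w_1,w_2,e$ extends holomorphically in the spatial variable to a complex neighbourhood of $\{x\in\Omega':\dist(x,\partial\Omega')>4\epsilon\}$ containing $K$ — trivially, since the kernels $p_t(\cdot,y)$ and $p_{t-s}(\cdot,\xi)$ are entire in the first slot — so $u(t,\cdot)$ continues holomorphically to $K$, and it remains to control $w_1(t,z),w_2(t,z),e(t,z)$ for $z\in K$.

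The essential point is $w_1$. For $z=x+i\eta\in K$ its holomorphic continuation is $w_1(t,z)=(4\pi t)^{-d/2}\int_{\Omega'}e^{-(z-\zeta)^2/4t}u_0(\zeta)\,\der\zeta_1\wedge\cdots\wedge\der\zeta_d$, and the naive estimate diverges because $|e^{-(z-\zeta)^2/4t}|=e^{(|\eta|^2-|\Re(z-\zeta)|^2)/4t}$. The plan is to deform the real chain $\Omega'$ to the complex chain $\Gamma_z=\{y+i\sigma(y):y\in\Omega'\}$ with $\sigma(y)=\beta(|x-y|)\,\eta$, where $\beta\equiv 1$ on $[0,2\epsilon]$ and $\beta(r)|\eta|=(|\eta|+2\epsilon-r)_+$ for $r\ge 2\epsilon$. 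This $\sigma$ is tailored to meet three competing constraints: (i) $|\sigma(y)|<\dist(y,\partial\Omega)$ with margin $\ge 8\epsilon$, so that $\Gamma_z$ and the homotopy $\{y+is\sigma(y):s\in[0,1]\}$ remain inside $\mathcal{E}(\Omega)$, where the $(d,0)$-form $e^{-(z-\zeta)^2/4t}u_0(\zeta)\,\der\zeta_1\wedge\cdots\wedge\der\zeta_d$ is holomorphic and hence closed; (ii) $\sigma\equiv 0$ on $\partial\Omega'$ (valid because $|x-\xi|\ge|\eta|+2\epsilon$ there), so the lateral face of the homotopy is degenerate and Stokes' theorem gives $\int_{\Omega'}=\int_{\Gamma_z}$; and (iii) $|\eta-\sigma(y)|<|x-y|$ with a quantitative gap for $|x-y|\ge 2\epsilon$, so that on $\Gamma_z$ one has $\Re((z-\zeta)^2)=|x-y|^2-|\eta-\sigma(y)|^2\ge\min(|x-y|^2,4\epsilon^2)$. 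Splitting $\Gamma_z$ at $|x-y|=2\epsilon$: over $\{|x-y|<2\epsilon\}$ one has $\sigma\equiv\eta$, the chain is flat, $z-\zeta=x-y\in\R^d$, the integrand is a genuine Gaussian, and the substitution $y=x+\sqrt t\,w$ with dominated convergence (continuity of $u_0$, uniform over $K$) produces the limit $u_0(z)$; over $\{|x-y|\ge 2\epsilon\}$ one has $|e^{-(z-\zeta)^2/4t}|\le e^{-\epsilon^2/t}$, a bounded Jacobian (as $\sigma$ is $1$-Lipschitz), and $|u_0|\le\sup_{\mathcal{K}}|u_0|<\infty$ on the fixed compact set $\mathcal{K}=\{y+iv:y\in\overline{\Omega'},\,|v|\le\dist(y,\partial\Omega)-2\epsilon\}\Subset\mathcal{E}(\Omega)$, so the contribution is $\le C\,t^{-d/2}e^{-\epsilon^2/t}\to 0$, everything uniformly in $z\in K$. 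This gives $w_1(t,z)\to u_0(z)$ uniformly on $K$.

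For the two boundary terms, if $z\in K$ and $\xi\in\partial\Omega'$ then $|\Re(z-\xi)|\ge\dist(x,\partial\Omega')\ge|\eta|+5\epsilon>|\Im(z-\xi)|$, hence $\Re((z-\xi)^2)\ge c\epsilon^2$, so $|e^{-(z-\xi)^2/4(t-s)}|\le e^{-c\epsilon^2/(t-s)}$ and
\[
  |w_2(t,z)|+|e(t,z)|\le C\big(\|\psi\|+\|\widetilde\psi\|\big)\int_0^t(t-s)^{-(d+1)/2}e^{-c\epsilon^2/(t-s)}\,\der s\longrightarrow 0\qquad(t\to 0_+),
\]
uniformly on $K$, the time integral tending to $0$. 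Combining the three limits, $u(t,z)=w_1(t,z)+w_2(t,z)+e(t,z)\to u_0(z)$ uniformly on $K$; since $K\subset\mathcal{E}(\Omega)$ was arbitrary and $\mathcal{E}(\Omega)=\bigcup_{\Omega'\Subset\Omega}\mathcal{E}(\Omega')$, the theorem follows. The main obstacle is the construction in the previous paragraph: the contour shift $\sigma$ must agree with $\eta$ near $z$ so that the continued heat kernel is a decaying Gaussian there, must remain inside the ``light cone'' set $\mathcal{E}(\Omega)$ along the whole homotopy so that the holomorphy of $u_0$ is available, and must vanish near $\partial\Omega'$ so that Stokes contributes no boundary term — and it is exactly the defining inequality $|\Im z|<\dist(\Re z,\partial\Omega)$ of $\mathcal{E}(\Omega)$, together with its $1$-Lipschitz character, that allows these demands to be met at once.
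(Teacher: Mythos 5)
Your proposal is correct and follows essentially the same strategy as the paper: a complex contour deformation inside $\mathcal{E}(\Omega)$ turns the free Gaussian evolution of the analytic datum into an approximate identity, while the remainder is written as a single layer potential over $\partial\Omega'$ whose complexified kernel vanishes to infinite order off the light cone $|\Im(z-y)|<|\Re(z-y)|-\epsilon$. The differences are implementation-level only — you taper the contour shift $\sigma(y)=\beta(|x-y|)\,\eta$ to zero at $\partial\Omega'$ so that Stokes' theorem produces no correction term, whereas the paper's Lemma \ref{LemmaLemma} uses a flat shift together with a $\overline{\partial}\chi$ error term, and you split the remainder into two layer potentials rather than one.
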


We note here that the known results in dimension one are special cases of these theorems. 
The articles \cite{MRR} and \cite{SD} determine when the class of functions belonging to the reachable set in dimension one are analytically extendable and vice versa using Gevrey polynomials and the Cauchy formula respectively.  The characterisation in \cite{SD}  is a special case of our result, but the proof techniques are considerably different in that we use a simple Wick rotation and avoid Fourier analysis. The idea came from the use of the complex Gaussian frame in \cite{GW, killip}, and the use of analytic extension to the upper half plane in \cite{killip}. In these cases for the initial boundary value problem a frame of Gaussians was used to model the initial data, but these are only approximations to solutions of the heat equation. However, their analytic extensions are easily identifiable. The paper \cite{killip} also uses an analytic extension of the heat kernel to the upper-half plane to prove decay properties of solutions to the Schr\"odinger equation (under a Wick rotation) in the exterior of a smooth compact domain. The authors decided to use the exact solution to the IBVP for the heat equation using thermal layer potentials but the ideas of a Gaussian-like solution and extension to the upper half plane were borrowed from these previous papers. 

 We futhermore direct the reader to the works of \cite{orsoni, KNT, orsonihartmann, CR} which completely characterise the 1d reachable states using the Bargman transform as a different approach. In particular since an exact characterisation has been given for this reachable space in \cite{orsonihartmann} as the Bergman space of the square, which corresponds to our $\mathcal{E}(\Omega)$ in one dimension, this shows that the converse inclusion in Prop. \ref{main1} is not true. The almost converse inclusion given by Theorem \ref{main2} is the best we can do with respect to our Frechet space $\mathcal{O}(\mathcal{E}(\Omega))$. They have also extended their results to the Hermite heat equation in \cite{orsonihartmann2} in 1d. We expect that some different analysis of the heat kernel in higher dimensions is needed to expand the results perhaps in terms of wave-front sets in order to get sharp results in higher dimensions.  The reader is also invited to see \cites{rosier2, lissy1, lissy2, coron1,coron2, mu, MLG} for related results in the 1d case. Recently \cite{ELBT} strengthens the results of the present work, showing that the reachable space is stable under a small perturbation. The range of the backwards heat operator for $\mathbb{R}^d$ has also been characterised in \cite{hall, hall2} which is less general than our Theorem \ref{main2} and \cite{zelditch, zelditch2, zelditch3} with applications to ergodic theory. Our focus is on some short control theory applications. 

The paper is structured as follows. To keep the article self-contained we start by giving the background on boundary layer potential theory for Lipschitz domains in Section \ref{thermal}. This is a summary of results from \cite{costabel}. 
Section \ref{s23} gives the proofs of Proposition \ref{main1} and Theorem \ref{main2} assuming the validity of Theorem  \ref{main3}. Theorem  \ref{main3} is then proved in Section \ref{s5}.

\section{Thermal boundary layer potential theory}\label{thermal}
Let $\Omega \subset \R^d$ be a bounded Lipschitz domain with $d\geq 1$ and boundary $\partial\Omega$. In other words, the boundary $\partial\Omega$ is locally congruent to the graph of a Lipschitz continuous function on $\mathbb{R}^{d-1}$, and $\Omega$ is located on exactly one side of the boundary. 
For a number $T>0$ which is fixed we write
\begin{align*}
Q= (0,T)\times\Omega \quad \Sigma=(0,T)\times\partial\Omega.
\end{align*}
Further we let $\Omega_t=\{t\}\times\Omega$ so that 
\begin{align*}
\partial Q=\overline{\Sigma}\cup \Omega_0\cup\Omega_T.
\end{align*}
For $r,s\geq 0$ we let
\begin{align*}
H^{r,s}(\mathbb{R}\times\mathbb{R}^d)=L^2(\mathbb{R};H^r(\mathbb{R}^d))\cap H^s(\mathbb{R};L^2(\mathbb{R}^d)),
\end{align*}
and for $r,s\leq 0$ we define by duality
\begin{align*}
H^{r,s}(\mathbb{R}\times\mathbb{R}^{d})=(H^{-r,-s}(\mathbb{R}\times \mathbb{R}^d))'.
\end{align*}

By $H^{r,s}(Q)$ we denote the space of restrictions of elements of $H^{r,s}(\mathbb{R}\times\mathbb{R}^d)$ to $Q$ equipped with the quotient norm. The spaces $H^{r,s}(\mathbb{R}\times\partial\Omega)$ and $H^{r,s}(\Sigma)$ are defined analogously. For smooth $\partial\Omega$ they are defined for all $r,s$ whereas for Lipschitz boundaries $\partial\Omega$ they are intrinsically defined for $|r|\leq 1$. This is because the spaces 
are invariant under Lipschitz coordinate transformations only in case $|r|\leq 1$ ($s$ is arbitrary). 

Following Costabel \cite{costabel} we introduce some additional (non-standard) definitions (c.f. also \cites{brown1, brown2}). We denote the subspaces
\begin{align*}
\tilde{H}^{r,s}(Q)=\{ u\in H^{r,s}((-\infty,T)\times\Omega)|\; u(t,x)=0 \; \textrm{for} \; t<0\} \subset H^{r,s}((-\infty,T)\times\Omega)).
\end{align*}
Moreover, we define
\begin{align}
H^{1,\frac{1}{2}}(Q,\partial_t-\Delta)=\{u\in H^{1,\frac{1}{2}}(Q)| \quad (\partial_t-\Delta) u\in L^2(Q)\}
\end{align}
with the norm $$||u||_{H^{1,\frac{1}{2}}(Q)}+||(\partial_t-\Delta)u||_{L^2(Q)}.$$  Let $\gamma^-: H^{\frac{1}{2},\frac{1}{4}}(\mathbb{R}\times\partial\Omega)\rightarrow H^{1,\frac{1}{2}}(\mathbb{R}\times\Omega)$ be a continuous right inverse of the surjective trace map 
$$
 \gamma : H^{1,\frac{1}{2}}(\mathbb{R}\times\Omega) \rightarrow H^{\frac{1}{2},\frac{1}{4}}(\mathbb{R}\times\partial\Omega).
$$ For $u\in H^{1,\frac{1}{2}}(\mathbb{R}\times\Omega,\partial_t-\Delta)$ we denote by $\gamma_1u\in H^{-\frac{1}{2},-\frac{1}{4}}(\Sigma)$ the continuous linear form on $H^{\frac{1}{2},\frac{1}{4}}(\Sigma)$ defined by 
\begin{align}
\gamma_1 u:\phi\rightarrow b(u,\gamma^{-}\phi)
\end{align}
where 
\begin{align}
b(u,v)=\int\limits_Q(\nabla u\cdot \nabla v-((\partial_t-\Delta)u)v)\,dx\,dt+ \langle \partial_t u,v \rangle.
\end{align}
Note that $\partial_t u \in H^{0,-\frac{1}{2}}(\mathbb{R}\times\Omega)$ and therefore we have a well defined dual pairing
$\langle \partial_t u,v \rangle$ between $\partial_t u \in H^{0,-\frac{1}{2}}(\mathbb{R}\times\Omega)$ and $v \in H^{0,\frac{1}{2}}(\mathbb{R}\times\Omega)$ that extends the usual $L^2$-inner product.
The bilinear form $b$ is then continuous on the space $H^{1,\frac{1}{2}}(\mathbb{R}\times\Omega,\partial_t-\Delta)\times H^{1,\frac{1}{2}}(\mathbb{R}\times\Omega)$. In case  $u,v\in C_0^2(\mathbb{R}\times\overline{\Omega})$ this simplifies to
\begin{align}
b(u,v)=\int\limits_{\Sigma}\partial_nu(t,x)v(t,x)\,dt \,d\sigma.
\end{align}
Furthermore, the map $\gamma_1:H^{1,\frac{1}{2}}(Q,\partial_t-\Delta)\rightarrow H^{-\frac{1}{2},-\frac{1}{4}}(\Sigma)$ is continous and for $u\in C^2(\overline{Q})$ we have $\gamma_1u=\partial_nu|_{\Sigma}$. 
Let $G(t,x)$ be defined as 
\begin{align}\label{G}
 G(t,x)=
 \left\{ 
 \begin{array}{llc}
  (4\pi t)^{-d/2}\exp(-\frac{|x|^2}{4t})      & \text{ if } \quad t>0 & \\
  0 & \text{ if }\quad  t\leq 0.\\
 \end{array}  
 \right.\\ \notag
\end{align}
For sufficiently regular $h$ the single layer potential for the heat equation is defined as follows:
\begin{align}
S(h)(t,x)=\int\limits_0^t\int\limits_{\partial\Omega}G(t-s,x-y) h(s,y)\,dy \,ds
\end{align}
for $(t,x)\in Q$. 
The boundary layer potential operator is defined as 
\begin{align}
V(h)(t,x)=\int\limits_0^t\int\limits_{\partial\Omega}G(t-s,x-y) h(s,y)\,dy \,ds
\end{align}
for $(t,x)\in \Sigma$. Finally the double layer potential is defined as
\begin{align}
D(h)(t,x)=\int\limits_0^t\int\limits_{\partial\Omega}\gamma_1 G(t-s,x-y) h(s,y)\,dy \,ds
\end{align}
for $(t,x)\in Q$.

For the following see \cite{costabel}*{Remark 3.2}.  

\begin{prop} \label{onebeforetheorem}
The single layer potential operator $S$ continuously extends to a map $S: H^{-\frac{1}{2},-\frac{1}{4}}(\Sigma) \to H^{1,\frac{1}{2}}(Q)$.
The boundary layer potential operator extends by continuity to an isomorphism
\begin{align}
V:H^{-\frac{1}{2},-\frac{1}{4}}(\Sigma)\rightarrow H^{\frac{1}{2},\frac{1}{4}}(\Sigma).
\end{align}
\end{prop}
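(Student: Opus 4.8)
Although the proposition is Costabel's, for a self-contained treatment I would follow his approach; here is the plan. The main device is an exponential weight in time: since $t$ ranges only over the bounded interval $(0,T)$, conjugating by $e^{-t}$ changes every norm appearing in the statement by at most a fixed multiplicative constant, so it suffices to prove both assertions with $\partial_t-\Delta$ replaced by $\partial_t+1-\Delta$ and $G$ by $G_1(t,x):=e^{-t}G(t,x)$; from now on $S$ and $V$ denote the operators built from $G_1$. The shift has two effects: the space--time Fourier transform $\widehat{G_1}(\tau,\xi)=(1+\rmi\tau+|\xi|^2)^{-1}$ now satisfies $|\widehat{G_1}(\tau,\xi)|\le C(1+|\tau|+|\xi|^2)^{-1}$ on all of $\R\times\R^d$ (the singularity of $\widehat G$ at the origin is gone), and after a Fourier transform in $t$ the operator becomes $-\Delta+(1+\rmi\tau)$, whose real part $-\Delta+1$ is strictly positive — which is what will eventually give coercivity.

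For the boundedness of $S$ I would argue as follows. Since $G(t-s,\cdot)$ vanishes for $s>t$ and $h$ is supported in $\{t\ge 0\}$, on $Q$ the single layer potential coincides with the restriction of the space--time convolution $G_1*(h\,\der\sigma)$, where $h\,\der\sigma$ is the distribution $\phi\mapsto\int\!\!\int_{\partial\Omega}\phi\,h\,\der\sigma\,\der t$ on $\R\times\R^d$. By the multiplier bound above, convolution with $G_1$ maps $H^{-1,-\frac12}(\R\times\R^d)$ into $H^{1,\frac12}(\R\times\R^d)$; and the map $h\mapsto h\,\der\sigma$ is bounded from $H^{-\frac12,-\frac14}(\Sigma)$ into $H^{-1,-\frac12}(\R\times\R^d)$, this being essentially the transpose of the anisotropic trace theorem $H^{1,\frac12}(\R\times\R^d)\to H^{\frac12,\frac14}(\R\times\partial\Omega)$. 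Composing the two and restricting to $Q$ gives $S\colon H^{-\frac12,-\frac14}(\Sigma)\to H^{1,\frac12}(Q)$. Since the single layer potential is continuous across $\partial\Omega$, $V(h)=\gamma\,S(h)$, and the continuity of $V$ into $H^{\frac12,\frac14}(\Sigma)$ then follows from the trace theorem for $H^{1,\frac12}(Q)$.

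For the isomorphism it remains to show $V$ is bounded below and has dense range. Taking the Fourier transform in $t$, for each $\tau\in\R$ the function $\widehat{S(h)}(\tau,\cdot)$ is the single layer potential of $\widehat h(\tau,\cdot)\in H^{-\frac12}(\partial\Omega)$ for $-\Delta+(1+\rmi\tau)$ on $\R^d$, and $\widehat{V(h)}(\tau,\cdot)=\gamma\,\widehat{S(h)}(\tau,\cdot)$. Green's identity on $\R^d$, together with the jump relation $[\gamma_1 S(h)]=\pm h$ for the conormal derivative across $\partial\Omega$, gives
\[
 \pm\,\langle \widehat{V(h)}(\tau,\cdot),\,\widehat h(\tau,\cdot)\rangle_{\partial\Omega}
 \;=\;\int_{\R^d}\Big(|\nabla\widehat{S(h)}(\tau,x)|^2+(1+\rmi\tau)\,|\widehat{S(h)}(\tau,x)|^2\Big)\,\der x .
\]
Separating real and imaginary parts, the modulus of the left-hand side is comparable to $\|\nabla\widehat{S(h)}(\tau,\cdot)\|_{L^2}^2+(1+|\tau|)\,\|\widehat{S(h)}(\tau,\cdot)\|_{L^2}^2$, i.e.\ to the square of the $\tau$-weighted $H^1(\R^d)$ norm of $\widehat{S(h)}(\tau,\cdot)$; and this same $\tau$-weighted norm dominates $\|\widehat h(\tau,\cdot)\|_{H^{-\frac12}(\partial\Omega)}$ from above, via $[\gamma_1 S(h)]=\pm h$ and the continuity of the conormal trace map from each side of $\partial\Omega$, both uniform in $\tau$ in the $\tau$-weighted spaces. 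Chaining these estimates and integrating in $\tau$ yields
\[
 \|h\|_{H^{-\frac12,-\frac14}(\Sigma)}^2\le C\,\big|\langle V(h),h\rangle\big|\le C\,\|V(h)\|_{H^{\frac12,\frac14}(\Sigma)}\,\|h\|_{H^{-\frac12,-\frac14}(\Sigma)},
\]
so $V$ is bounded below, hence injective with closed range. Its transpose for the $L^2(\Sigma)$ pairing is the single layer potential of the backward heat operator $-\partial_t-\Delta$, which has the identical structure and is therefore also injective, so $V$ has dense range and is an isomorphism.

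The step I expect to be the genuine obstacle is the invertibility of $V$. Because $\partial_t-\Delta$ is not self-adjoint, Lax--Milgram cannot be applied to $V$ directly, and the weight/Fourier-in-time device is precisely what restores coercivity; the subtle point is that Green's identity by itself returns only the ``elliptic'' part $\|\nabla\widehat{S(h)}(\tau)\|^2+\|\widehat{S(h)}(\tau)\|^2$ of the anisotropic norm, so one must additionally exploit the imaginary part of the resolvent equation (equivalently the smoothing of $-\Delta+(1+\rmi\tau)$) to recover the $|\tau|$ contribution, and keep all constants uniform in $\tau$. A further technical nuisance is that on a Lipschitz domain one must remain within the restricted scale $|r|\le 1$ and cannot invoke elliptic regularity gain or Calder\'on--Zygmund bounds, but the pair $H^{-\frac12,-\frac14}(\Sigma)\to H^{\frac12,\frac14}(\Sigma)$ is exactly the one for which this purely variational argument closes.
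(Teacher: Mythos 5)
The paper itself gives no proof of this proposition --- it is imported wholesale from Costabel (the citation to Remark 3.2 of \cite{costabel}, which collects his mapping and invertibility theorems for $S$ and $V$) --- so your sketch has to be measured against Costabel's argument, and in outline it is sound, but the route to invertibility is genuinely different. Costabel stays in the time domain: he applies the first Green identity to $u=S(h)$ on both sides of $\partial\Omega$, invokes the jump relation $[\gamma_1 S(h)]=-h$ to get $\langle h,Vh\rangle=\|\nabla u\|_{L^2}^2+\langle\partial_t u,u\rangle$, and uses that the last pairing is nonnegative because $u$ vanishes for $t<0$; the missing $H^{0,\frac12}$ half of the anisotropic norm is then recovered not from the coercivity identity but from the equation $\partial_t u=\Delta u$ away from $\partial\Omega$ plus interpolation. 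You instead Fourier-transform in $t$ and read the $|\tau|$-weight directly off the imaginary part of the $\tau$-by-$\tau$ Green identity for $-\Delta+(1+\rmi\tau)$. That buys a more transparent bookkeeping of the anisotropic weights, at the price of needing every trace, conormal-trace and jump estimate on the Lipschitz boundary to hold uniformly in the $\tau$-weighted norms --- which you rightly flag as the crux. Two inputs you treat as free are not: the jump relation $[\gamma_1 S(h)]=\mp h$ at the regularity $h\in H^{-\frac12,-\frac14}(\Sigma)$ is itself one of Costabel's theorems, and the passage between spaces on the finite cylinder $\Sigma$ and on $\R\times\partial\Omega$ (needed both to take the Fourier transform in $t$ and to make sense of the pairing $\langle Vh,h\rangle$) rests on the coincidence of $\tilde H^{-\frac12,-\frac14}$ with $H^{-\frac12,-\frac14}$ in this exponent range, which is precisely the content of the remark the paper cites. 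Neither point breaks the argument, but both would have to be supplied for the sketch to close.
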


\begin{prop} \label{wp}
The trace map $\gamma: u\rightarrow u|_{\Sigma}$
is continuous and surjective from $\tilde{H}^{1,\frac{1}{2}}(Q)$ to $H^{\frac{1}{2},\frac{1}{4}}(\Sigma)$.
For all $f\in \tilde{H}^{-1,-\frac{1}{2}}(Q)$ and $g\in H^{\frac{1}{2},\frac{1}{4}}(\Sigma)$ there exists a unique $u\in \tilde{H}^{1,\frac{1}{2}}(Q)$ with 
\begin{align}\label{dirichletprob}
&(\partial_t-\Delta)u=f \quad \textrm{in} \quad Q \\& \nonumber
 \gamma u=g \quad \textrm{on} \quad \Sigma. 
\end{align}
In case $f=0$ the solution $u$ is given by $u=S(V^{-1}g)=S(\gamma_1 u)-D(\gamma u)$.
\end{prop}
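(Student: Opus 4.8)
The plan is to derive the proposition from three ingredients supplied by the thermal layer potential theory recalled above (and worked out in detail in \cite{costabel}): the anisotropic trace theorem, a parabolic energy estimate giving uniqueness, and the solvability of the inhomogeneous initial–boundary value problem, together with Green's identity for the heat operator, which yields the representation formula.

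First I would handle the trace map $\gamma$. Continuity on $\tilde{H}^{1,\frac12}(Q)$ is the anisotropic trace theorem: localising by bi-Lipschitz boundary charts, which is legitimate because the spaces involved have spatial smoothness $\le 1$, one reduces to the half-space estimate $\norm{u|_{x_d=0}}_{H^{\frac12,\frac14}}\lesssim\norm{u}_{H^{1,\frac12}}$, proved by the Fourier transform in the tangential and time variables. For surjectivity I would not use the abstract right inverse $\gamma^-$ but a \emph{causal} one built from the layer potentials: by Proposition~\ref{onebeforetheorem} the single layer potential is bounded $S\colon H^{-\frac12,-\frac14}(\Sigma)\to H^{1,\frac12}(Q)$, and since $G(t,\cdot)$ vanishes for $t\le 0$ the potential $S(h)$ vanishes for $t<0$, whence $S$ maps into $\tilde{H}^{1,\frac12}(Q)$. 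The single layer potential is continuous up to $\partial\Omega$ with boundary value $\gamma S=V$ (valid on $C_0^2$ and extended by density using Proposition~\ref{onebeforetheorem}), so $E:=S\circ V^{-1}\colon H^{\frac12,\frac14}(\Sigma)\to\tilde{H}^{1,\frac12}(Q)$ is a bounded right inverse of $\gamma$, giving surjectivity.

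Next I would prove uniqueness by an energy estimate, and then existence. Suppose $u\in\tilde{H}^{1,\frac12}(Q)$ solves $(\partial_t-\Delta)u=0$ and $\gamma u=0$. Using that elements of $H^{1,\frac12}(Q,\partial_t-\Delta)$ have $L^2(\Omega)$-traces at $t=0$ and $t=T$ and obey the Green identity encoded in $b$, and that $b(u,v)$ depends only on $\gamma v$ when $(\partial_t-\Delta)u\in L^2$, so that $b(u,u)=\langle\gamma_1u,\gamma u\rangle$, one gets
\begin{align*}
0=\langle\gamma_1 u,\gamma u\rangle=b(u,u)=\int_Q|\nabla u|^2\,dx\,dt+\langle\partial_t u,u\rangle=\int_Q|\nabla u|^2\,dx\,dt+\tfrac12\norm{u(T,\cdot)}_{L^2(\Omega)}^2,
\end{align*}
since $(\partial_t-\Delta)u=0$, $\langle\partial_tu,u\rangle=\tfrac12(\norm{u(T,\cdot)}_{L^2(\Omega)}^2-\norm{u(0,\cdot)}_{L^2(\Omega)}^2)$, and $u(0,\cdot)=0$ because $u\in\tilde{H}^{1,\frac12}(Q)$. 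Hence $\nabla u=0$ a.e., so $u(t,\cdot)$ is a.e.\ constant in $x$, and $\gamma u=0$ forces $u\equiv0$. For existence, set $u_1:=Eg=S(V^{-1}g)\in\tilde{H}^{1,\frac12}(Q)$; then $\gamma u_1=g$, and $(\partial_t-\Delta)u_1=0$ in $Q$ because the source of the single layer potential is carried by $\Sigma\subset\partial Q$. It remains to solve $(\partial_t-\Delta)u_2=f$, $\gamma u_2=0$, $u_2\in\tilde{H}^{1,\frac12}(Q)$, which is the initial–boundary value problem with homogeneous boundary and initial data and source $f\in\tilde{H}^{-1,-\frac12}(Q)$; this is standard parabolic well-posedness — via the mapping properties of the thermal volume potential, or Lions' variational method for the parabolic form on $L^2((0,T);H_0^1(\Omega))$ combined with the mixed-derivative interpolation theorem to land in the anisotropic scale. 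Then $u=u_1+u_2$, unique by the previous step. Finally, when $f=0$ one has $u=u_1=S(V^{-1}g)$, and the Green representation $u=S(\gamma_1u)-D(\gamma u)$ follows by applying the distributional Green identity for $\partial_t-\Delta$ on $Q$ to $u$ and the fundamental solution $G(t-s,x-y)$: the interior contribution reproduces $u(t,x)$, the volume term $\int_\Omega G(t,x-y)u(0,y)\,dy$ vanishes since $u(0,\cdot)=0$, and the two surface terms are exactly $S(\gamma_1u)$ and $-D(\gamma u)$; the manipulation is justified by density of smooth functions together with the mapping properties of $S$, $D$ and $\gamma_1$.

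I expect the main obstacle is not any individual step but the low-regularity functional analysis underpinning several of them on a mere Lipschitz domain: the anisotropic trace theory on Lipschitz boundaries (available precisely because the relevant order is $\le 1$), the existence of $L^2$-time-traces and the validity of Green's identity for $u\in H^{1,\frac12}(Q,\partial_t-\Delta)$, and the identification of the solution of the inhomogeneous problem with an element of $\{v\in\tilde{H}^{1,\frac12}(Q):\gamma v=0\}$, i.e.\ the matching of the parabolic maximal-regularity scale with the anisotropic Sobolev scale. All of these are established in \cite{costabel} and the references given there; granting them, the proposition follows by the bookkeeping above.
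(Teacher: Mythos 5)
Your argument is correct and is essentially an unpacking of the paper's own proof, which consists of citing Lemma 2.4, Theorem 2.9, Theorem 2.20 and Corollary 2.19(c) of \cite{costabel}: your three ingredients (the anisotropic trace theorem, the energy-estimate uniqueness, and the layer-potential/Green representation) are precisely the content of those cited results. The low-regularity technicalities you flag at the end (time traces, Green's identity for $H^{1,\frac12}(Q,\partial_t-\Delta)$, matching of scales) are indeed where the real work lies, and they are handled in \cite{costabel} exactly as you indicate.
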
 
\begin{proof}
 This summarizes Lemma 2.4 and Theorem 2.9 as well as Theorem 2.20 and Corollary 2.19(c) in \cite{costabel}.
\end{proof}

The well-posedness of the initial value problem \eqref{INg} can be reduced in the usual way to unique solvability of the inhomogenous problem 
of Prop. \ref{wp}. Since the initial value problem is not discussed in \cite{costabel} we will now add some more detailed explanations.

\begin{lemma} We have the following continuous inclusions:
$$H^{1,\frac{1}{2}}(Q,(\partial_t -\Delta)) \subset H^{1}((0,T),H^{-1}(\Omega)) \subset C([0,T],H^{-1}(\Omega)).$$
In particular the restriction map $\gamma_0$ as defined by 
\begin{align*}
&\gamma_0: u \mapsto u|_{t=0} \\&
\gamma_0: H^{1,\frac{1}{2}}(Q,(\partial_t -\Delta)) \to H^{-1}(\Omega)
\end{align*} is well defined and continuous.
\end{lemma}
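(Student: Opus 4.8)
The plan is to peel off the spatial regularity of $u$ and the $L^2$-regularity of $(\partial_t-\Delta)u$, deduce that the distributional time-derivative of $u$ lives in $L^2((0,T),H^{-1}(\Omega))$, and then invoke the classical one-dimensional Sobolev embedding with values in a Hilbert space.

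First I would note that, by definition of the spaces involved, $H^{1,\frac{1}{2}}(Q,(\partial_t-\Delta)) \subset H^{1,\frac{1}{2}}(Q) \subset L^2((0,T),H^1(\Omega))$, where the last inclusion is continuous because the quotient norm on $H^{1,\frac{1}{2}}(Q)$ dominates the $L^2((0,T),H^1(\Omega))$-norm of the restriction (restriction in time and space only decreases norms). Since the Laplacian is a bounded operator $\Delta\colon H^1(\Omega)\to H^{-1}(\Omega):=(H^1_0(\Omega))'$, via $\langle\Delta v,\varphi\rangle=-\int_\Omega\nabla v\cdot\nabla\varphi$ for $\varphi\in H^1_0(\Omega)$, we get $\Delta u\in L^2((0,T),H^{-1}(\Omega))$ with norm controlled by $\|u\|_{H^{1,\frac{1}{2}}(Q)}$. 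On the other hand $(\partial_t-\Delta)u\in L^2(Q)=L^2((0,T),L^2(\Omega))\subset L^2((0,T),H^{-1}(\Omega))$. Adding these, and observing that the identity $\partial_t u=(\partial_t-\Delta)u+\Delta u$ holds in $\mathcal{D}'(Q)$ and hence identifies the $H^{-1}(\Omega)$-valued distributional time-derivative of $u$, we conclude $\partial_t u\in L^2((0,T),H^{-1}(\Omega))$. Together with $u\in L^2((0,T),H^1(\Omega))\subset L^2((0,T),H^{-1}(\Omega))$ this is precisely the statement $u\in H^1((0,T),H^{-1}(\Omega))$, and collecting the constants along the way gives
$$\|u\|_{H^1((0,T),H^{-1}(\Omega))}\le C\Big(\|u\|_{H^{1,\frac{1}{2}}(Q)}+\|(\partial_t-\Delta)u\|_{L^2(Q)}\Big),$$
which is the first continuous inclusion.

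For the second inclusion I would simply quote the standard fact that every $w\in H^1((0,T),X)$, $X$ a Hilbert (indeed Banach) space, has a unique representative in $C([0,T],X)$ with $\|w\|_{C([0,T],X)}\le C\|w\|_{H^1((0,T),X)}$; concretely $w(t)=w(s)+\int_s^t\partial_r w(r)\,\mathrm{d}r$ for a.e.\ $s$, and the right-hand side is an (absolutely) continuous function of $t$. Taking $X=H^{-1}(\Omega)$ and composing with the evaluation map $C([0,T],H^{-1}(\Omega))\to H^{-1}(\Omega)$, $w\mapsto w(0)$, which is plainly continuous, shows that $\gamma_0u:=u|_{t=0}$ is well defined — the continuous representative being uniquely determined by $u$ as a distribution on $Q$, hence independent of any choices — and continuous as a map $H^{1,\frac{1}{2}}(Q,(\partial_t-\Delta))\to H^{-1}(\Omega)$.

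I do not expect any genuine obstacle here: the proof is a concatenation of continuous linear maps. The only two points that deserve a moment's care are the choice of pivot space — one must use $H^{-1}(\Omega)=(H^1_0(\Omega))'$ rather than $(H^1(\Omega))'$, so that $\Delta$ applied to a general (non-vanishing) element of $H^1(\Omega)$ still lands there — and the passage through the quotient-norm definition of $H^{1,\frac{1}{2}}(Q)$, which is harmless since restriction does not increase norms.
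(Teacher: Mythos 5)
Your proof is correct and follows essentially the same route as the paper's: both decompose $\partial_t u = \Delta u + (\partial_t-\Delta)u$, use the boundedness of $\Delta\colon H^1(\Omega)\to H^{-1}(\Omega)$ together with $L^2(Q)\subset L^2((0,T),H^{-1}(\Omega))$ to place $\partial_t u$ in $L^2((0,T),H^{-1}(\Omega))$, and then invoke the one-dimensional Sobolev embedding $H^1((0,T),H^{-1}(\Omega))\subset C([0,T],H^{-1}(\Omega))$. Your added remarks on the choice of pivot space and the quotient norm are sound but not needed beyond what the paper records.
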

\begin{proof}
 Let $u \in H^{1,\frac{1}{2}}(Q,(\partial_t -\Delta))$. Then, in particular, $u \in L^2([0,T],H^1(\Omega))$ and $\partial_t u = \Delta u + g$, where $g \in L^2(Q)$.
 Therefore, $\Delta u \in L^2([0,T],H^{-1}(\Omega))$ and $\partial_t u \in L^2((0,T),H^{-1}(\Omega))$. 
 Consequently, $u \in H^1((0,T),H^{-1}(\Omega)) \subset C([0,T],H^{-1}(\Omega))$. Continuity follows from the implied inequalities
 \begin{align*}
  & \| \partial_t u \|^2_{L^2((0,T),H^{-1}(\Omega))}  \leq \| \Delta u \|^2_{L^2((0,T),H^{-1}(\Omega))} + \| (\partial_t -\Delta) u\|_{L^2(Q)}^2 \\& \leq \|u \|^2_{H^{1,0}(Q)} + \| (\partial_t -\Delta) u\|_{L^2(Q)}^2
 \end{align*}
 and the Sobolev embedding theorem. 
\end{proof}

\begin{lemma} \label{hilf1}
 Let $g \in H^1_0(\Omega)$ and let $w$ be solution of the heat
equation on $(0,T) \times \R^d$ defined by
$$
 w(t,x) = \int_{\Omega} G(t,x-y) g(y) dy.
$$
Then $w \in H^{1,\frac{1}{2}}(Q)$.
\end{lemma}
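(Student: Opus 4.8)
The plan is to identify $w$ with the free heat semigroup acting on the zero–extension of $g$ and to read off the two required regularity statements from the energy identity and one interpolation step. First I would extend $g$ by zero to a function $\tilde g$ on $\R^d$; since $g\in H^1_0(\Omega)\subset L^2(\Omega)$ the extension lies in $L^2(\R^d)$ (in fact in $H^1(\R^d)$, but only $L^2$ will be needed). For $t>0$ we then have $w(t,\cdot)=G(t,\cdot)\ast\tilde g$, so $w\in C([0,\infty);L^2(\R^d))$, $w\in C^\infty((0,\infty)\times\R^d)$, and $\partial_t w=\Delta w$ holds classically on $(0,\infty)\times\R^d$ and hence in $\mathcal{D}'((0,T)\times\R^d)$.

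For the spatial regularity I would use the energy identity $\frac{d}{dt}\|w(t)\|_{L^2(\R^d)}^2=-2\|\nabla w(t)\|_{L^2(\R^d)}^2$, integrated over $(0,T)$, together with $\|w(t)\|_{L^2(\R^d)}\le\|\tilde g\|_{L^2(\R^d)}$ (which follows from $\|G(t,\cdot)\|_{L^1(\R^d)}=1$); this gives $w\in L^2((0,T);H^1(\R^d))$ with $\|w\|_{L^2((0,T);H^1(\R^d))}\le C\,\|g\|_{L^2(\Omega)}$. Consequently $\Delta w\in L^2((0,T);H^{-1}(\R^d))$, and since $\partial_t w=\Delta w$ we also get $\partial_t w\in L^2((0,T);H^{-1}(\R^d))$; thus $w\in L^2((0,T);H^1(\R^d))\cap H^1((0,T);H^{-1}(\R^d))$.

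To conclude I would use that $H^{1,\frac{1}{2}}(Q)$ is by definition the space of restrictions to $Q$ of elements of $H^{1,\frac{1}{2}}(\R\times\R^d)=L^2(\R;H^1(\R^d))\cap H^{1/2}(\R;L^2(\R^d))$, so it suffices to produce $W\in H^{1,\frac{1}{2}}(\R\times\R^d)$ with $W=w$ on $(0,T)\times\R^d$. Take $W=E_t w$, where $E_t$ is a fixed reflection-type extension operator from $(0,T)$ to $\R$ that maps, simultaneously, $L^2((0,T);Y)\to L^2(\R;Y)$ and $H^1((0,T);Y)\to H^1(\R;Y)$ for every Banach space $Y$; then $W\in L^2(\R;H^1(\R^d))\cap H^1(\R;H^{-1}(\R^d))$. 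The final ingredient is the interpolation inclusion $L^2(\R;X_0)\cap H^1(\R;X_1)\hookrightarrow H^{1/2}(\R;[X_0,X_1]_{1/2})$, applied with $X_0=H^1(\R^d)$, $X_1=H^{-1}(\R^d)$ and $[H^1(\R^d),H^{-1}(\R^d)]_{1/2}=L^2(\R^d)$; this yields $W\in H^{1/2}(\R;L^2(\R^d))$, hence $W\in H^{1,\frac{1}{2}}(\R\times\R^d)$ and finally $w=W|_Q\in H^{1,\frac{1}{2}}(Q)$. The only point that is not completely routine heat-semigroup smoothing theory is this interpolation inclusion, which I expect to be the main (albeit minor) obstacle; it follows quickly by Plancherel in the time variable from the interpolation inequality $\|v\|_{[X_0,X_1]_{1/2}}^2\le C\,\|v\|_{X_0}\|v\|_{X_1}$ applied to $v=\widehat{W}(\tau)$ followed by Cauchy--Schwarz in $\tau$. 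Alternatively, one may appeal directly to the identification of $H^{1,\frac{1}{2}}(Q)$ with the intrinsic space $L^2((0,T);H^1(\Omega))\cap H^{1/2}((0,T);L^2(\Omega))$ for cylinders over Lipschitz domains, in the spirit of \cite{costabel}, and conclude from the two displayed memberships after restricting the $x$-variable to $\Omega$.
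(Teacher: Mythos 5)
Your proof is correct, but it follows a genuinely different route from the paper. The paper symmetrizes in time, i.e.\ it studies $u(t,x)=\int G(|t|,x-y)g(y)\,dy$ on all of $\R\times\R^d$, computes its space--time Fourier transform explicitly as $\hat u(\tau,\xi)=\tfrac{2\xi^2}{\xi^4+\tau^2}\hat g(\xi)$, and reads off the memberships $(1+|\xi|)\hat u\in L^2$ and $(1+|\tau|)^{1/2}\hat u\in L^2$. Because this symmetrized extension is not controllable near the zero spatial frequency, the paper must first reduce to the case where $\hat g$ vanishes to second order at $\xi=0$, by subtracting heat potentials of data supported in $\R^d\setminus\overline\Omega$ (which are separately shown to lie in $H^{1,\frac12}$). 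You instead stay on the finite time interval, obtain $w\in L^2((0,T);H^1)\cap H^1((0,T);H^{-1})$ from the energy identity and the equation, extend in time by reflection, and pass to $H^{1/2}(\R;L^2)$ via the Plancherel--Cauchy--Schwarz argument based on $\|v\|_{L^2}^2\le\|v\|_{H^1}\|v\|_{H^{-1}}$; all of these steps are sound, including the claim that a single reflection-plus-cutoff operator is simultaneously bounded on $L^2((0,T);Y)$ and $H^1((0,T);Y)$ for any Banach space $Y$. What your route buys is the elimination of the moment-subtraction trick and, as a by-product, a slightly stronger statement: only $g\in L^2(\Omega)$ is used, not $g\in H^1_0(\Omega)$. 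What the paper's route buys is a completely explicit, computation-only verification that needs no vector-valued extension operators or interpolation embeddings. One small caution: your closing ``alternative'' appeal to the intrinsic identification of $H^{1,\frac12}(Q)$ with $L^2((0,T);H^1(\Omega))\cap H^{1/2}((0,T);L^2(\Omega))$ is more delicate for half-integer time regularity than for integer orders, so it is good that your main argument constructs the global extension $W$ directly rather than relying on that identification.
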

\begin{proof}
 First we remark that if $h \in C^\infty_0(\R^d \setminus \overline \Omega)$ then 
$$
 h_t(x) = \int_{\R^d} G(t,x-y) h(y) dy
$$ 
is a smooth solution of the heat equation on $\R \times \Omega$ that vanishes for $t<0$. In particular this gives an element in
$H^{1,\frac{1}{2}}(\Omega)$. We think of $g$ as an element in $H^1_c(\R^d)$ using extension by zero. By the above remark we can assume without loss of generality that $\hat g$ vanishes at zero of order two. This can be achieved by subtracting off finitely many functions $h$ as above in such a way that moments of $g$ up to order two vanish.
Now consider the function
$$
 u(t,\cdot) = \int_{\Omega} G(|t| ,x-y) g(y) dy,
$$ 
whose restriction to $Q$ equals $w$.
It is easy to compute the Fourier transform of $u$, and the result is
$$
 \hat u (\tau,\xi) =  \frac{2 \xi ^2}{\xi ^4+\tau ^2} \hat g(\xi).
$$
Since $\hat g$ vanishes of order two at zero the right hand side is bounded for small $\tau$. One sees directly that $(1+|\xi|) \hat u \in L^2$ and 
$(1+|\tau|)^{\frac{1}{2}}\hat u \in L^2$ and therefore
$u \in H^{1,\frac{1}{2}}(\R \times \R^d)$.
\end{proof}

We then obtain
\begin{prop} \label{wpwp}
For any $u_0 \in H^1_0(\Omega)$ and $g\in H^{\frac{1}{2},\frac{1}{4}}(\Sigma)$ there exists a unique $u\in H^{1,\frac{1}{2}}(Q)$ with 
\begin{align}\label{dirichletprob}
&(\partial_t-\Delta)u=0 \quad \textrm{in} \quad Q \nonumber \\& 
u|_{t=0} = u_0 \\& \nonumber
 \gamma u=g \quad \textrm{on} \quad \Sigma.  
 \end{align}
\end{prop}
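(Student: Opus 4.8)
The plan is to reduce the inhomogeneous initial-boundary-value problem \eqref{dirichletprob} to the purely boundary problem already solved in Proposition \ref{wp} by subtracting off a suitable particular solution that carries the initial data. First I would use Lemma \ref{hilf1}: given $u_0 \in H^1_0(\Omega)$, extend it by zero to $g \in H^1_c(\R^d)$ and set
\begin{align*}
 w(t,x) = \int_\Omega G(t,x-y)\,u_0(y)\,dy,
\end{align*}
which by that lemma lies in $H^{1,\frac12}(Q)$, solves $(\partial_t - \Delta)w = 0$ in $Q$, and satisfies $w|_{t=0} = u_0$ (the last because $G(t,\cdot)$ is an approximate identity, so $w(t,\cdot) \to u_0$ in $H^{-1}(\Omega)$, and this identifies $\gamma_0 w$ via the previous lemma). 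This $w$ is our particular solution.

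Next I would look for $u = w + v$, so that $v$ must solve $(\partial_t - \Delta)v = 0$ in $Q$, $v|_{t=0} = 0$, and $\gamma v = g - \gamma w =: \tilde g$ on $\Sigma$. The crucial point is that $\tilde g \in H^{\frac12,\frac14}(\Sigma)$: indeed $g$ is given to be in that space, and $\gamma w \in H^{\frac12,\frac14}(\Sigma)$ because $w \in H^{1,\frac12}(Q)$ and the trace map $\gamma : H^{1,\frac12}(\R\times\Omega) \to H^{\frac12,\frac14}(\Sigma)$ is bounded; a small subtlety is that we want $w$ (extended by zero for $t<0$) to lie in $\tilde H^{1,\frac12}(Q)$ so that its trace lies in $H^{\frac12,\frac14}(\Sigma)$ and the vanishing-at-$t=0$ is compatible — this is exactly what the proof of Lemma \ref{hilf1} arranges, since $u$ there is even in $t$ and its restriction to $t<0$ is controlled. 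Then Proposition \ref{wp} applied with $f = 0$ and boundary datum $\tilde g$ produces a unique $v \in \tilde H^{1,\frac12}(Q)$ with $(\partial_t-\Delta)v = 0$ and $\gamma v = \tilde g$; by the previous lemma $v|_{t=0}$ makes sense in $H^{-1}(\Omega)$ and equals $0$ because $v \in \tilde H^{1,\frac12}(Q)$ vanishes for $t<0$ and is continuous into $H^{-1}(\Omega)$. Setting $u = w+v \in H^{1,\frac12}(Q)$ gives existence.

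For uniqueness, suppose $u \in H^{1,\frac12}(Q)$ solves \eqref{dirichletprob} with $u_0 = 0$ and $g = 0$. Subtracting the $w$ above (now $w \equiv 0$ since $u_0 = 0$), we get that $u$, extended by zero to $t<0$, lies in $\tilde H^{1,\frac12}(Q)$ — here one uses that $u|_{t=0}=0$ in $H^{-1}(\Omega)$ together with the continuous inclusion $H^{1,\frac12}(Q,(\partial_t-\Delta)) \subset C([0,T],H^{-1}(\Omega))$ from the first lemma to see the zero extension is admissible — with $(\partial_t-\Delta)u = 0$ and $\gamma u = 0$. The uniqueness half of Proposition \ref{wp} (with $f=0$, $g=0$) then forces $u = 0$.

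The main obstacle I anticipate is the careful bookkeeping at $t=0$: one must check that the zero-extension of $w$ (and later of a candidate solution) genuinely lands in the space $\tilde H^{1,\frac12}(Q)$ on which Costabel's theory operates, i.e. that the initial condition $u|_{t=0}=u_0$ interpreted in $H^{-1}(\Omega)$ via the continuity lemma is equivalent to the extension-by-zero belonging to $\tilde H^{1,\frac12}$ modulo the particular solution $w$. This is where the two preliminary lemmas (the $C([0,T],H^{-1}(\Omega))$ embedding and Lemma \ref{hilf1}) do the real work; everything else is a formal linear splitting plus a direct appeal to Proposition \ref{wp}.
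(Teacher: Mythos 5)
Your existence argument is exactly the paper's: construct $w$ from Lemma \ref{hilf1} with the zero-extension of $u_0$, note $\gamma w \in H^{\frac12,\frac14}(\Sigma)$ by boundedness of the trace, solve the homogeneous problem of Proposition \ref{wp} with $f=0$ and boundary datum $g-\gamma w$, and set $u=v+w$. That part is correct and needs no change.

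The one place where you deviate is uniqueness, and there your argument has a genuine gap. You claim that a solution $u\in H^{1,\frac12}(Q)$ with $u|_{t=0}=0$ in $H^{-1}(\Omega)$, extended by zero to $t<0$, lands in $\tilde H^{1,\frac12}(Q)$, so that the uniqueness half of Proposition \ref{wp} applies. But $s=\tfrac12$ is exactly the critical exponent for extension by zero: a function in $H^{\frac12}((0,T);L^2(\Omega))$ extends by zero to an element of $H^{\frac12}(\R;L^2(\Omega))$ only if it lies in the strictly smaller Lions--Magenes space $H^{\frac12}_{00}$ (i.e.\ satisfies the Hardy-type condition $\int_0^T t^{-1}\|u(t)\|^2\,dt<\infty$), and knowing only that $u(t)\to 0$ in $H^{-1}(\Omega)$ does not give this. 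So the step ``the zero extension is admissible'' does not follow from the continuity lemma as you assert. The paper avoids the issue entirely by quoting Costabel's uniqueness statement (Lemma 2.3 of \cite{costabel}) for the initial-boundary-value problem directly, rather than routing uniqueness through the $\tilde H^{1,\frac12}$ framework of Proposition \ref{wp}; that is the clean fix here.
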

\begin{proof}
Uniqueness is an immediate consequence of the uniqueness statement \cite{costabel}*{Lemma 2.3}.
To show existence extend $u_0$ by zero to an element in $H^1_c(\R^d)$ and construct a solution $w \in H^{1,\frac{1}{2}}(Q)$ as in Lemma \ref{hilf1}.
Then $\gamma w \in H^{\frac{1}{2},\frac{1}{4}}(\Sigma)$.
Now apply  Prop. \ref{wp} with $f=0$ to construct a solution of the homogeneous heat equation $v \in \tilde{H}^{1,\frac{1}{2}}(Q) $ 
with boundary data $g - \gamma w$.
Then $u=v+w$ solves the above problem.
\end{proof}

\section{Proof of Proposition \ref{main1} and Theorem \ref{main2}} \label{s23}

\subsection{Proof of Proposition \ref{main1}}

\begin{proof}
 For $z \in \mathbb{C}^d$ we write $|z|^2 = \overline{z}^\mathtt{T} z$ for the square of its length and $z^2 = z^\mathtt{T} z$ for the analytic extension of the square of the absolute value on $\mathbb{R}^d$.  \\
First note that the heat kernel admits an analytic extension $\tilde G$ as follows
\begin{align}
 \tilde G(t,z-w)=
 \left\{ 
 \begin{array}{llc}
  (4\pi t)^{-d/2}\exp(-\frac{(z-w)^2}{4 t})      & \text{ if } \quad t>0 & \\
  0 & \text{ if }\quad  t\leq 0\\
 \end{array}  
 \right\}.
\end{align}
Note that $\Re{(z-w)^2} = |x-w|^2-|y|^2$ when $z=x+iy$. Therefore in the open set defined by 
$|y|<|x-w|$ the function $\tilde G(t,z-w)$ is smooth in $t$, and complex analytic in $z$.

For an integrable function $f$ on $\Sigma$ we define
 \begin{align}
\tilde S(f)(t,z)=\int\limits_0^t \int\limits_{\partial\Omega} \tilde G(t-s,z-y) f(s,y)\,dy \,ds.
\end{align}
Since the kernel $k(t,z,(s,y))=G(t-s,z-y)$ is smooth on $\R \times \mathcal{E}(\Omega) \times \Sigma$ 
the operator $f \mapsto \tilde S(f)$ extends by continuity to the space of  distributions supported in $\overline \Sigma$,
 $\mathcal{D}'(\overline \Sigma)$. It maps continuously from $\mathcal{D}'(\overline \Sigma)$ to $C^\infty([0,T] \times \mathcal{E}(\Omega))$ and its range consists of functions that vanish of infinite order at $t=0$ in $C^\infty(\mathcal{E}(\Omega))$. To see this simply note that a distribution $f$ with support in $\overline{\Sigma}$ is compactly supported and thus for any open neighborhood $U$ of $\overline{\Sigma}$ we have the estimate $$| f(h) | \leq C_U \| h |_U \|_{C^k(U)}$$ for all $h \in C^\infty(\R \times \R^d)$. One now simply applies this estimate to the heat kernel.\\
 Applying the $\nabla_{\overline z}$ operator shows that
 $$
 \nabla_{\overline z} \tilde S(f)(t,z) =0
 $$
 since the kernel is holomorphic in $z$. We conclude that the mapping $f \mapsto (\tilde Sf)(T)$ is continuous from $\mathcal{D}'(\overline \Sigma)$ to $\calO(\calEO)$.
If $u \in \mathcal{R}_\Omega$ then, by Prop. \ref{wp} and Prop. \ref{onebeforetheorem}, we have the representation
$$
 u = S(r)|_{t=T}
$$
for some $r \in H^{-\frac{1}{2},-\frac{1}{4}}(\Sigma)$. 
If $\gamma$ denotes the trace map to $\Sigma$ the dual of the restriction operator defines a distribution $v=\gamma^* r$ in $\R \times \R^{d-1}$. Here the canonical extension is from from $H^{-\frac{1}{4}}((0,T),H^{-\frac{1}{2}}(\partial\Omega))$
to $H^{-\frac{1}{4}}(\R,H^{-\frac{1}{2}}(\mathbb{R}^{d-1}))$. Hence, $v$ is a compactly supported distribution with support on $\overline \Sigma$. Thus,
$\tilde S v$ defines a function in $\calO(\calEO)$ that restricts to $S(r)$.
Hence, the function $\tilde S(v)|_{t=T}$ defines an analytic extension of $u$ as required, and we have shown $\mathcal{R}_\Omega \subset \calO(\calEO)$.

It remains to prove optimality of the domain $\calEO$. 
Assume without loss of generality that $T=1$. Let $p\in \mathbb{C}^d \setminus \overline{\calEO}$. Then necessarily we have
$\mathrm{dist}(\Re(p),\overline{\Omega}^c)< |\Im(p)|$. This means there exists a point $x_0 \in \overline{\Omega}^c$ with
$|\Re(p) - x_0 | < |\Im (p)|$.
Now suppose $a \in \C$ is such that $\Re(a)>0$. We are going to use the following distributional source 
$$
h_a(x,s) =\chi_{(0,1)}(s)\delta_{x=x_0}e^{-\frac{a}{4-4s}} (1-s)^{\frac{d}{2}-1}.
$$
This gives rise via $S(h_a)$ to a function $u_t(x)$ that is a solution of the heat equation in $\R \times \Omega$ and that extends smoothly across the boundary of $\partial \Omega$. It is given explicitly by
$$
 u(t,z) = \frac{1}{(4 \pi)^\frac{d}{2}} \int_{0}^t (t-s)^{-\frac{d}{2}} e^{-\frac{(z-x_0)^2}{4(t-s)}} e^{-\frac{a}{4(1-s)}}  (1-s)^{\frac{d}{2}-1}ds.
 $$
 Thus, the function $g(z)=u(1,x)$ is reachable. We obtain rather explicitly
 $$
 g(z) = \frac{1}{(4 \pi)^\frac{d}{2}}  \int_{0}^1 s^{-1} e^{-\frac{(z-x_0)^2}{4s}}  e^{-\frac{a}{4s}} ds = \frac{1}{(4 \pi)^\frac{d}{2}} E_1\left(\frac{1}{4} \left((z-x_0)^2+a\right)\right),$$
 where $E_1(z)=\Gamma(0,z)$ denotes the generalized exponential integral \cite{olver2010nist}*{\mathsection 8.19} that can be expressed in terms of the incomplete Gamma function $\Gamma(b,z)$. We have by \cite{olver2010nist}*{\mathsection 8.19(iv)} the following expansion
 $$
  E_1(z) = - \gamma - \log(z) - \sum_{k=1}^\infty \frac{ (-z)^k}{k (k!)},
 $$
 where $\gamma$ is the Euler-Mascheroni constant.
 This function is not analytic at $z=0$, hence, $g(z)$ is not holomorphic at $p$ when $(p-x_0)^2 + a =0$, i.e.
 when $$ \Re(a) = (\Im(z))^2 -(\Re(p) -x_0)^2$$ and $$2 \Im(p) \cdot x_0 = -\Im(a).$$ The condition $(\Im(z))^2 -(\Re(p) -x_0)^2>0$ allows us to find such an $a$ with $\Re(a)>0$.
\end{proof}

\subsection{The one dimensional case as a special example}\label{1dsection}
We consider the problem for the interval $[-L,L]$, which is given by 
\begin{align} \label{intC} 
& \partial_t u=\Delta u \quad \mathrm{in} \quad [0,T] \times (-L,L) \\& \nonumber
u(t,-L)=h_1(t) \quad \quad u(t,L)=h_2(t) \quad \mathrm{in}\quad [0,T]\\& \nonumber
u(0,x)=0 \quad \mathrm{in} \quad (-L,L)
\end{align}
Let $h_1, h_2\in H^{1/4}(0,T)$. 
Because the boundary is a collection of $2$ points we expect layer potential theory tells us that the boundary integral is just evaluation along these two points. In this case the solution is: 
\begin{align}\label{rep1}
u(t,x)= \frac{1}{\sqrt{4 \pi}}\int\limits_0^t\left(q_1(s)\frac{e^{-\frac{|x-L|^2}{4(t-s)}}}{\sqrt{t-s}}+q_2(s)\frac{e^{-\frac{|x+L|^2}{4(t-s)}}}{\sqrt{t-s}}\right)\,ds.
\end{align}
The Fourier transform  of $\chi_{(0,\infty)}(t)\frac{e^{-\frac{L^2}{ 4t}}}{\sqrt{t}}$
is $\frac{e^{-2L\sqrt{\rmi\tau}}}{\sqrt{2\rmi \tau}}$.
Then we solve for the Fourier transform of $q_1$ and $q_2$ using the system
\begin{align}
\begin{pmatrix}
(2\rmi\tau)^{-\frac{1}{2}} & (2\rmi\tau)^{-\frac{1}{2}}e^{-2L\sqrt{\rmi\tau}}\\
(2\rmi\tau)^{-\frac{1}{2}}e^{-2L\sqrt{\rmi\tau}}& (2\rmi\tau)^{-\frac{1}{2}}
\end{pmatrix}
\begin{pmatrix}
\mathcal{F}_tq_1(\tau)\\
\mathcal{F}_tq_2(\tau)
\end{pmatrix}
=\begin{pmatrix}
\mathcal{F}_th_1(\tau)\\
\mathcal{F}_th_2(\tau)
\end{pmatrix}
\end{align}
This system is invertible for all $\tau\geq 0$ as the determinant of the coefficient matrix is 
\begin{align}
(2\rmi\tau)^{-1}(1-e^{-2L\sqrt{\rmi\tau}})\neq 0.
\end{align}
We define the analytic extension of $u(t,z)$ as before. Let
 \begin{align}
\tilde S(h)(t,z)=\nonumber \frac{1}{\sqrt{4\pi}}\int\limits_0^t\left(q_1(s)\frac{e^{-\frac{(z-L)^2}{4(t-s)}}}{\sqrt{t-s}}+q_2(s)\frac{e^{-\frac{(z+L)^2}{4(t-s)}}}{\sqrt{t-s}}\right)\,ds
\end{align}
 and the same proof follows with $\Sigma=(0,T)\times \{-L,L\}$.  

\subsection{Proof of Theorem \ref{main2}}

\begin{proof} 
Assume that $\Omega$ is a ball $B_R(x_0)$. We can assume without loss of generality that $x_0=0$. 
Then $\calEO$ is simply $$\calEO = \{ z = x + \rmi y \in \C \mid  |x| + |y| < R \}.$$ This domain is therefore invariant under Wick rotation, i.e.
$\rmi \calEO = \calEO$ and this is the property that we are going to use.
In particular, the fibre of $0 \in \Omega$
in the ball bundle $\calEO$ is exactly $\rmi \Omega$. 
Now assume that $u \in \calO(U)$ for some bounded open set  $U \subset \C^d$ with $\overline{\calEO} \subset U$. Fix a subset $U_1$
with $\overline{\calEO} \subset U_1$ and $\overline{U_1} \subset U$ and pick a cutoff function $\chi \in C^\infty_0(U)$ with $\chi(x) =1$ for all
$x \in U_1$.  In the following we identify the complex plane notationally with $\R^2$ with $\C$ and therefore use the notations $u(x,y)$ and $u(x+ \rmi y)$ interchangeably.
Since $u(x,y)$ is holomorphic it satisfies the Cauchy Riemann equations $(\nabla_x + \rmi \nabla_y) u(x,y) =0$ on $U$.
Let $K_{t,y}$ be the solution operator for the heat equation on $\R^d_y$, which is a standard convolution with $G(t,y)$ defined by \eqref{G} as its kernel. 
Define $\phi_t(y) = K_{t,y}(u \chi)(0,y)$.
The function $\phi_t(y)$ is in $C\left([0,\infty)_t,C^\infty(\R^d_y ) \right) \cap C^\infty([0,\infty)_t \times \R^d_y )$ and is a solution of
\begin{align*}
&\left(\partial_t -\Delta_{y}\right) \phi_t(y)=0 \\&
\phi_0(y)= \phi(y)=(u\chi)(iy).
\end{align*}
For $t>0$ the function $\phi_t(y)$ is obtained by applying an integral operator with entire kernel in $y$ to a compactly supported function. Hence, $\phi_t(y)$ is for any $t>0$ an entire function in the $y$-variable. Let us denote the analytic extension in the $y$-variable by $u_t(x, y)$. Here the roles of $x$ and $y$ are interchanged, so let us explain in more detail what this means. The function $u_t(x, y)$ satisfies the Cauchy Riemann equations
$(\nabla_{x} + \rmi \nabla_y) u_t(x, y) = 0$ and is completely determined for $t>0$ by $u_t(0,y) = \phi_t(y)$, e.g. $u_t(z)=\phi_t(-iz)$. 
Lemma \ref{LemmaLemma} now implies that $u_t$ converges to $u(x,y)$ uniformly on $\calEO=\rmi \calEO$ as $t \to 0_+$. Indeed,  choose $R'>R$ so that $\rmi B_{R'}(0) \subset U_1$ and Theorem \ref{main3}  gives us uniform convergence on the compact set $\overline{\calEO}$.
By construction $(\Delta_x + \Delta_y)u_t(x,y) =0$ and therefore  $u_t(x,y)$ solves the inverse heat equation
\begin{align*}
&(\partial_t+\Delta_x)u_t(x,y)=0\\&
u_0(x,y)=u(x,y)
\end{align*}   
on $\rmi \calEO$, the ``Wick rotated'' $\calEO$. Here we have used uniqueness of the analytic continuation to conclude that also $u_t(x,y)$ solves the heat equation.
Since $\rmi \calEO$ contains $\Omega$ the function $u_t(x):= u_t(x,0)$ solves
\begin{align*}
&(\partial_t+\Delta_x)u_t(x)=0 \,\, \textrm{in } Q\\&
u_0(x)=u(x) \\&
u_t(x)|_{\Sigma}=h(t,x) 
\end{align*} 
where $h$ extends smoothly across $\partial \Omega$.
Change of variables $T-t\rightarrow t$ shows that $u \in \mathcal{R}(T,g)$ for some $T>0$ and some function $g(x) = u_T(x)$.
\end{proof}

\section{Analytic properties of solutions of the heat equation} \label{s5}

In this section we will prove Theorem \ref{main3} which is a major ingredient in the proof of Theorem \ref{main2}. By Proposition \ref{main1} any positive time solution of the heat equation is analytic in $\mathcal{E}(\Omega)$. We investigate in this section what happens if the function $u$ was already analytic in $\mathcal{E}(\Omega)$ at time $t=0$.
Assume that $\Omega$ is a bounded open subset in $\R^d$ and $\Omega_1$ is a bounded open subset such that $\overline{\Omega} \subset \Omega_1$. It is easy to see that $\overline{\mathcal{E}(\Omega)} \subset \mathcal{E}(\Omega_1)$. We can therefore find a smooth 
cutoff function $\chi \in C^\infty_0(\C^d)$ with support in $\mathcal{E}(\Omega_1)$ and which is equal to one on $\overline{\mathcal{E}(\Omega)}$.
We have the following Lemma for the free heat operator $K_t= e^{t \Delta_0}$ on $\R^d$.

\begin{lemma} \label{LemmaLemma}
 Assume that $u \in C^\infty(\Omega_1)$ has an analytic extension to $\mathcal{E}(\Omega_1)$. Then the analytic continuation of
 $K_t(u \chi)$ converges to the analytic continuation of $u$ uniformly on compact subsets of  $\mathcal{E}(\Omega)$ as $t \to 0_+$.
\end{lemma}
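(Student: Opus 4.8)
The plan is to establish the convergence directly from the integral formula. Write $\tilde G(t,z)=(4\pi t)^{-d/2}\exp(-z^2/(4t))$ for the entire extension of the heat kernel and $\tilde u$ for the given holomorphic extension of $u$ to $\mathcal{E}(\Omega_1)$, so that the analytic continuation of $K_t(u\chi)$ is $z\mapsto\int_{\R^d}\tilde G(t,z-w)(u\chi)(w)\,\der w$ and we must show that it converges to $\tilde u(z)$ uniformly on a fixed compact set $A\subset\calEO$. Two elementary geometric facts will be used throughout. First, since $A$ is a compact subset of the open set $\calEO$, there is $\delta>0$ with $\dist(x,\partial\Omega)-|y|\ge\delta$ for all $x+\rmi y\in A$. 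Second, since $\overline\Omega\subset\Omega_1$ the number $\varepsilon_0:=\dist(\overline\Omega,\partial\Omega_1)$ is positive, and a one-line segment argument (the segment from $x$ to its nearest point of $\partial\Omega_1$ must cross $\partial\Omega$) yields $\dist(x,\partial\Omega_1)\ge\dist(x,\partial\Omega)+\varepsilon_0$ for every $x\in\Omega$.

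Fix $z=x+\rmi y\in A$ and set $\rho=\tfrac12(|y|+\dist(x,\partial\Omega))$, so that $|y|<\rho<\dist(x,\partial\Omega)$ with both gaps at least $\delta/2$; in particular $\chi\equiv1$ on the closed ball $\overline{B_\rho(x)}\subset\Omega$. Since $\Re(z-w)^2=|x-w|^2-|y|^2\ge\rho^2-|y|^2\ge\delta^2/4$ once $|x-w|\ge\rho$, and $u\chi$ has a fixed compact support, the part of the integral over $\{|x-w|\ge\rho\}$ is $O(e^{-c/t})$ with $c>0$ depending only on $\delta$; hence $K_t(u\chi)(z)=\int_{B_\rho(x)}\tilde G(t,z-w)\,\tilde u(w)\,\der w+O(e^{-c/t})$ uniformly on $A$. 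The heart of the matter is to move the real $d$-cycle $B_\rho(x)\subset\C^d$ to a small complex ball centred at $z$, staying inside the region where $\tilde u$ is holomorphic. Put $\rho_1=\tfrac12\min(\varepsilon_0,\rho)$ and, for $s\in[0,1]$, let $C_s=\{\,x+\rmi sy+\eta:\eta\in\R^d,\ |\eta|\le r(s)\,\}$ with $r(s)=(1-s)\rho+s\rho_1$, so $C_0=B_\rho(x)$ and $C_1=B_{\rho_1}(z)$. A point of the solid region $\bigcup_{s\in[0,1]}C_s$ has real part in $B_{r(s)}(x)\subset\Omega_1$ and imaginary part of norm $s|y|$, and since $s|y|+r(s)<\dist(x,\partial\Omega)+\varepsilon_0\le\dist(x,\partial\Omega_1)$ holds at $s=0$ and $s=1$ it holds for all $s$ (the left side is affine in $s$), with a margin bounded below uniformly; hence $\bigcup_sC_s$ lies in a fixed compact subset of $\mathcal{E}(\Omega_1)$ as $z$ ranges over $A$. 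The holomorphic $d$-form $\omega=\tilde G(t,z-\zeta)\,\tilde u(\zeta)\,\der\zeta_1\wedge\cdots\wedge\der\zeta_d$ is closed, so Stokes' theorem over this region gives $\int_{C_0}\omega=\int_{C_1}\omega+\int_{\Gamma}\omega$ (up to orientation), where $\Gamma=\{\,x+\rmi sy+\eta:|\eta|=r(s),\ s\in[0,1]\,\}$ is the lateral boundary; on $\Gamma$ one has $\Re(z-\zeta)^2=r(s)^2-(1-s)^2|y|^2$, bounded below by a positive constant uniformly on $A$ (again since $\rho-|y|\ge\delta/2$), so $\int_{\Gamma}\omega=O(e^{-c'/t})$.

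It remains to evaluate the $C_1$ contribution: $\int_{C_1}\omega=\int_{|\eta|\le\rho_1}\tilde G(t,-\eta)\,\tilde u(z+\eta)\,\der\eta$, and splitting $\tilde u(z+\eta)=\tilde u(z)+(\tilde u(z+\eta)-\tilde u(z))$, using $\int_{|\eta|\le\rho_1}\tilde G(t,-\eta)\,\der\eta=1+O(e^{-c/t})$ and a Lipschitz bound for $\tilde u$ on a fixed neighbourhood of $A$, one gets $\int_{C_1}\omega=\tilde u(z)+O(t^{1/2})+O(e^{-c/t})$ uniformly on $A$; chaining the displays proves the lemma. I expect the contour-deformation step to be the only genuine obstacle: one cannot simply translate $B_\rho(x)$ straight up to $B_\rho(x)+\rmi y$, because for $z$ near the outer shell of the ball bundle $\calEO$ the intermediate sheets would leave $\mathcal{E}(\Omega_1)$, where $\tilde u$ need not be defined; this is precisely why the sheets $C_s$ must taper their real radius $r(s)$ as the imaginary part grows, and the whole construction rests on the slack $\dist(\cdot,\partial\Omega_1)\ge\dist(\cdot,\partial\Omega)+\varepsilon_0$ afforded by $\overline\Omega\subset\Omega_1$. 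The remaining difficulty is purely bookkeeping: every error term must be made $O(e^{-c/t})$ or $O(t^{1/2})$ uniformly over $A$, which forces one to track how $\rho$, $\rho_1$ and the constants $c,c'$ depend on $z$.
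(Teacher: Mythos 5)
Your proof is correct, and it ultimately rests on the same two pillars as the paper's argument --- holomorphy of $\zeta\mapsto\tilde G(t,z-\zeta)\tilde u(\zeta)$ and the infinite-order vanishing of $e^{-(z-w)^2/4t}$ as $t\to0_+$ wherever $|\Re(z-w)|>|\Im(z-w)|$ --- but it implements the contour deformation quite differently. The paper translates the \emph{entire} real hyperplane rigidly to $\R^d+\rmi\,\Im(z)$ and applies Stokes to the non-holomorphic integrand $\tilde G\cdot u\chi$; the price is a bulk term over the slab $\R^d+\rmi[0,1]\Im(z)$ weighted by $\overline{\partial}\chi$, which is killed because $\supp\overline{\partial}\chi$ lies outside $\overline{\calEO}$, where a reverse-triangle-inequality computation gives $|\Im(z-w)|+\epsilon_1<|\Re(z-w)|$; the main term is then the real heat kernel applied to $w\mapsto (u\chi)(w+\rmi\,\Im z)$, handled by the standard $\delta$-family argument. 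You instead discard the tail $|x-w|\ge\rho$ outright by Gaussian decay and deform only the ball $B_\rho(x)$ through a tapered tube that stays inside $\mathcal{E}(\Omega_1)$, where the integrand is genuinely holomorphic: Stokes then produces no bulk term at all, only a lateral boundary on which $\Re(z-\zeta)^2=r(s)^2-(1-s)^2|y|^2$ is uniformly positive. Your route costs more bookkeeping (the affine check that $s|y|+r(s)$ stays below $\dist(x,\partial\Omega)+\varepsilon_0$, the uniformity of $\rho$, $\rho_1$ and the constants over the compact set), but it buys three things: it dispenses with the $\overline{\partial}$-correction and the $\delta$-family lemma in favour of entirely elementary estimates; it lands the final contour at $z$ itself, so the limit is read off from the Gaussian being a probability measure plus a Lipschitz bound, yielding an explicit rate $O(\sqrt t)+O(e^{-c/t})$; and it makes visible the geometric obstruction (a straight translation of the ball would exit $\mathcal{E}(\Omega_1)$ near the outer shell of the cone bundle) that the paper's globally defined cutoff silently absorbs. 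The only point worth flagging is the degenerate case $\Im(z)=0$, where your tube collapses into $\R^d$ and the $(d+1)$-dimensional Stokes picture degenerates; there the deformation is unnecessary, since the annulus $\rho_1\le|\eta|\le\rho$ already contributes $O(e^{-c/t})$ by your own lateral estimate, so nothing is actually missing.
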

\begin{proof}
 Let us denote the analytic extension of $u$ by the same letter, i.e. $u(w)$ makes sense for $w \in \mathcal{E}(\Omega_1)$.
 Fix a point $z \in \mathcal{E}(\Omega)$, i.e. $| \Im(z) | < \mathrm{dist}(\Re(z),\Omega^c)$. The explicit formula for $K_t(u \chi)$ is
 $$
  K_t(u \chi)(z) = \frac{1}{(4 \pi t)^{\frac{d}{2}}} \int_{\R^d} e^{-\frac{(z-w)^2}{4t}}g(w) dw,
 $$
 where $g(w)= u(w) \chi(w)$. This integral is thought of as an integral over the real submanifold in the complex domain and we will now shift the contour in the $\Im(z)$-direction.  Let $\Gamma$ be the set $\R^d + \rmi \Im(z)$ and let $\mathcal{T}$ be the region
 $\R^d + \rmi [0,1] \Im(z) = \left \{ w + \rmi t \Im(z) \mid w \in \R^d, t \in [0,1] \right \}$. Recall that for a smooth function $\varphi$ in the complex plane we have the formula
 $$
  \int_{\partial Y} \varphi(x + \rmi y) d(x + \rmi y) = 2 \rmi \int_{Y} \overline{\partial}_z \varphi(x+\rmi y) dx dy,
 $$
 if $Y$ is a region with $C^1$-boundary (c.f equation 3.1.9 in \cite{hormander}). Thus, shifting the contour in the direction of $\Im(z)$, we obtain
 \begin{gather*}
  K_t(u \chi)(z) = \frac{1}{(4 \pi t)^{\frac{d}{2}}} \int_{\R^d} e^{-\frac{(z-w)^2}{4t}}g(w) dw \\
  = \frac{1}{(4 \pi t)^{\frac{d}{2}}} \int_{\Gamma} e^{-\frac{(z-w)^2}{4t}} g(w) dw  + 2 \rmi 
   \frac{1}{(4 \pi t)^{\frac{d}{2}}} \int_{\mathcal{T}} e^{-\frac{(z-w)^2}{4t}} u(w) \overline{\partial}_w \chi(w) d \sigma(w) \\= I_1(z) + I_2(z)
 \end{gather*}
 where $d  \sigma$ is the Lebesgue measure on  $\mathcal{T}$.
 The first integral $I_1(z)$ equals
 $$
  I_1(z) = \frac{1}{(4 \pi t)^{\frac{d}{2}}} \int_{\R^d} e^{-\frac{(\Re(z)-w)^2}{4t}} g(w + \rmi \Im(z)) dw. 
 $$
 Since the heat kernel is a $\delta$-family (this is a standard good kernel argument) this integral converges to 
 $g(\Re(z) + \rmi \Im(z))=g(z)$ at $t \searrow 0$. Since $g$ is smooth and compactly supported the convergence is uniform on compact subsets. It remains to show that the second integral
 $I_2(z)$ converges to zero uniformly on compact subsets.  Let $Z_z$ be the compact set $(\supp \overline{\partial}_w \chi) \cap \mathcal{T}$. Then the integrand in $I_2$ has support in $Z_z$, i.e. we can restrict the integration over $Z_z$ by the support properties of $\overline{\partial}_w\chi$.  By compactness of $\supp \overline{\partial}_w \chi$ there exists an $\epsilon_1>0$ such that for all 
 $z \in \mathcal{E}(\Omega)$ and all $w \in Z_z$ we have
\begin{gather*}
 \Im w = t \Im z \textrm{ for some } t \in [0,1],\\
 \mathrm{dist}(\Re(w),\Omega^c) + \epsilon_1  < | \Im(w) |  \leq | \Im(z) | < \mathrm{dist}(\Re(z),\Omega^c).
\end{gather*}
Note here $\epsilon_1$ is independent of $z$.

 For all elements $w \in Z_z$ we therefore have the inequality
 \begin{gather*}
  | \Im(z) -\Im(w) | =| \Im(z)| - |\Im(w)| \leq  \mathrm{dist}(\Re(z),\Omega^c) - \mathrm{dist}(\Re(w),\Omega^c) - \epsilon_1\\
  \leq \mathrm{dist}(\Re(z),b) - \mathrm{dist}(\Re(w),b) - \epsilon_1 \leq |\Re(z) - \Re(w)| - \epsilon_1,
 \end{gather*}
 where $b$ is a point on the boundary of $\Omega$ with $\mathrm{dist}(\Re(w),\Omega^c)=\mathrm{dist}(\Re(w),b)$.
 We have used in the first step that $\Im w = t \Im z$ for some $0 \leq t \leq 1$, and in the last step the reverse triangle inequality.
 The statement now follows since in the region 
 $|\Im(z-w)| + \epsilon_1 < |\Re(z-w)|$ the function $e^{-\frac{(z-w)^2}{4t}}$ vanishes of infinite order at $t=0$ uniformly.
 \end{proof}

\begin{proof}[Proof of Theorem \ref{main3}] 
Assume $u \in C^\infty(\Omega)$ is a smooth function that extends to a holomorphic function on $\mathcal{E}(\Omega)$.  Assume that $g$ is any solution of the heat equation on $Q = [0,T] \times \Omega$
 with $g(0)=u |_{\Omega}$. We need to show that the analytic continuation of $g(t)$ for $t>0$ converges uniformly on compact subsets of $\mathcal{E}(\Omega)$ to $u$ as $t \to 0_+$.
Fix a compact subset $K \subset \mathcal{E}(\Omega)$ and
 choose an open subset with smooth boundary $\Omega' \subset \Omega$ with $\overline{\Omega'} \subset \Omega$ so that
 $K \subset \mathcal{E}(\Omega')$. Such a subset can be constructed as follows. First note it follows from compactness of $K$ that we can find a constant $\epsilon_2>0$ such  that for all $z \in K$ we have $\Im(z) < \mathrm{dist}(\Re(z),\partial \Omega) -\epsilon_2$.   
 Now simply find an $\epsilon_2/2$-approximation of the Lipschitz domain by a smooth domain from inside. Such approximations are well known to exist (see for example, \cite{Verchota}). 

Let $\chi$ be a cutoff function as in the above Lemma \ref{LemmaLemma} which is compactly supported in $\calEO$ and equals to one on $\mathcal{E}(\Omega')$. Now define $f(t) := K_t(u \chi)(z)$. Then, by Lemma \ref{LemmaLemma}
 the function $f(t)$ has an analytic extension to $\mathcal{E}(\Omega')$ that converges uniformly on $K$ to $u$.
 Now consider the function 
 $$
 h(t)= \left \{ \begin{matrix} g(t) - f(t) & t \geq 0,\\ 0 & t<0.\end{matrix} \right.
 $$

Thus, we have a smooth solution $h$ of the heat equation on $\Omega'$ that vanishes at zero. Therefore, since for smooth boundaries $SV^{-1}$ maps smooth functions to smooth functions (\cite{costabel}*{Section 4}), there exists smooth data $r \in C^\infty((0,T)\times\partial\Omega')$ such that
 $$
  h(t,z) = \int_0^t \int\limits_{\Gamma} \tilde G(t-s,z-y) r(s,y)\,dy \,ds.
 $$
 Since for all $z \in K$ and $y \in \partial \Omega'$ we have $|\Im(z-y)| < |\Re(z-y)| - \epsilon_3$ for some $\epsilon_3>0$. Thus, $h$ converges uniformly to zero on $K$ as $t \to 0_+$.
 \end{proof}

\emph{Acknowledgments.}
Part of the work for this paper was carried out during the program
``Randomness, PDEs and Nonlinear Fluctuations'', funded by the \emph{Hausdorff Center of Mathematics} in Bonn, and A.W. is grateful for the support and hospitality during this time.


\begin{thebibliography}{10}

\bibitem{apraiz} J. Apraiz, L. Escauriaza, G. Wang, C. Zhang, \emph{Observability inequalities and measurable sets} J. Eur. Math. Soc. 16, (2014) pp. 2433-2475. 


\bibitem{brown1} R. Brown and Z. Shen, \emph{A note on boundary value problems for the heat equation in Lipschitz cylinders.} Proceedings of the American Mathematical Society 119(2) (1993). 

\bibitem{brown2} R. Brown, \emph{The method of layer potentials for the heat equation in Lipschitz cylinders} American Journal of Mathematics 111(2) (1989) pp. 339-379. 

\bibitem{CR} M.~ Chen and L. Rosier, \emph{Reachable states for the distributed control of the heat equation}. Hal-03259878. 

\bibitem{costabel} M.~Costabel, \emph{Boundary integral operators for the heat equation} Int. Eq. and Operator Theory. 14 (1990). 

\bibitem{costabel2} M. Costabel, and F. Sayas \emph{Time-Dependent Problems with the Boundary Integral Equation Method.}  Encyclopedia of Computational Mechanics Second Edition eds E. Stein, R. Borst and T. J. Hughes.
John Wiley and Sons, Ltd. (2017). 

\bibitem{costabel3} M. Costabel, \emph{On the limit Sobolev regularity for  Dirichlet and Neumann problems on 
       Lipschitz domains} Math. Nachr. 292 (2019), 2165-2173.
       
\bibitem{costabel4} M. Costabel \emph{Boundary integral operators on Lipschitz domains: Elementary results.}
SIAM J. Math. Anal. 19 (1988) 613-626.       


\bibitem{coron1} J.-M. Coron and S. Guerrero, \emph{Singular optimal control: A linear 1-D parabolic-hyperbolic example} Asymptot. Anal., 44 (2005), pp. 237–257.

\bibitem{coron2} J.-M. Coron and H.-M. Nguyen, \emph{Null controllability and finite time stabilization for the heat equations with variable coefficients in space in one dimension via backstepping approach} Arch. Ration. Mech. Anal., 225 (2017), pp. 993–1023.

\bibitem{SD} J.~Darde, and S.~Ervedoza, \emph{On the reachable set for the one-dimensional heat equation}. Siam Journal Opt. Control. 56(3), (2018) pp 1692-1715.

\bibitem{DZZ} T.~Duyckaerts, X. Zhang, and E. Zuazua, \emph{On the optimality of the observability inequalities for parabolic and hyperbolic systems with potentials}  Annales de l'Institute Henri Poincar\'e, Analyse non-linear 25 (2008) pp 1-41.

\bibitem{hall2} B.~Driver, B. Hall, and T. Kemp, \emph{The complex-time Segal-Bargmann transform} J. Funct. Anal. 278 (2020). 


\bibitem{ELBT} S.~Ervedoza, K. Le Blac'h, and M. Tucsnak. \emph{Reachability results for perturbed heat equations} Hal-03380745.

\bibitem{EZ} S Ervedoza, E Zuazua,  \emph{Sharp observability estimates for heat equations}
Archive for rational mechanics and analysis 202 (3), 975-1017. (2011)

\bibitem{HO}H. Fattorini and D. Russel, \emph{Exact controllability theorems for linear parabolic equations in one space dimension} Arch. Rat. Mech. Analysis. 43(4) pp. 272-292. (1971)

\bibitem{CZ} E.~Fernandez-Cara and E.~Zuazua, \emph{The cost of approximate controllability for heat equations: the linear case} Adv. Diff. Eq. 5(4-6) (2000) pp. 465-514.

\bibitem{FO} A. V. Fursikov, O. Y. Imanuvilov, \emph{Controllability of evolution equations}, Volume 34 of Lectures Notes Series. Seoul National University Research Institute of Mathematics Global Analysis Research Center (1996).



\bibitem{GW} H.~Gimperlein, and A.~Waters, \emph{A deterministic optimal design problem for the heat equation} Siam Journal Opt. Control. 55(1) (2017) pp. 51-69. 

\bibitem{hall} M.~Hall, \emph{The range of the heat operator}. The Ubiquitous Heat Kernel ed. 
Jay Jorgensen and Lynne Walling, AMS 2006, pp. 203-231.

\bibitem{HT} A.~ Hartmann, K.~ Kellay, M.~ Tucsnak, \emph{From the reachable space of the heat equation to Hilbert spaces of holomorphic functions} Journal of the European Mathematical Society, European Mathematical Society, 2020

\bibitem{orsonihartmann} A. Hartmann and M.A. Orsoni \emph{Separation of singularities for the Bergman space and application to control theory} Journal de Mathématiques Pures et Appliquées, 150(9) (2021) pp 181-201.  

\bibitem{orsonihartmann2} A. Hartmann and M.A. Orsoni \emph{Reachable space of the Hermite-heat equation} preprint 2021. 

\bibitem{hormander} L.~H\"ormander, \emph{The Analysis of Linear Partial Differential Operators I: Distribution Theory and Fourier Analysis}. Spinger-Verlag. Copyright 1963. Reprint of the 1990 edition. (2003)

\bibitem{mitrea} S Hofmann, J Lewis, M Mitrea \emph{Spectral properties of parabolic layer potentials and transmission boundary problems in nonsmooth domains} Illinois Journal of Mathematics 47 (4), (2003) pp. 1345-1361


\bibitem{John} F.~John. \emph{Partial differential equations} Vol. 1. of Applied Mathematical Sciences. Springer-Verlag. New York, 4th ed. (1982). 

\bibitem{killip} R. Killip, M.~Visan, and X.~Zhang, \emph{Quintic NLS in the exterior of a strictly convex obstacle}. Amer. J. Math. Vol. 138, No. 5, (2016) pp. 1193-1346.

\bibitem{KNT} K. Kellay, T. Normand, and M. Tuscnak \emph{Sharp reachability results for the heat equation in one space dimension} Hal-02302165 (2019). 
 
\bibitem{lr} G.~Lebeau, L.~Robbiano, \emph{Controle exact de l'equation de la chaleur}, Comm.~Partial Differential Equations 20 (1995), pp. 335 - 356. 

\bibitem{lissy1} P. Lissy, \emph{The cost of the control in the case of a minimal time of control: the example of the one-dimensional heat equation}  J. Math. Anal. Appl., 451(1), 1 (2017), pp. 497-507. 

\bibitem{lissy2} P. Lissy, \emph{Explicit lower bounds for the cost of fast controls for some 1-D parabolic or dispersive equations, and a new lower bound concerning the uniform controllability of the 1-D transport-diffusion equation}, Journal of Differential Equations 259(10) (2015), pp. 5331-5352. 

\bibitem{MLG} M. Lopez-Garcia. \emph{The weighted Bergman space on a sector and a degenerate parabolic equation}. \emph{J. Math. Anal. Appl.} 491(2) (2020). 


\bibitem{MRR} P.~ Martin, L.~Rosier, and P. Rouchon, \emph{On the reachable states for the boundary control of the heat equation}. Applied Mathematics Research eXpress. Oxford University Press, (2016) pp. 181-216

\bibitem{rosier} P.~ Martin, L.~ Rosier, P.~ Rouchon, \emph{Null controllability of the heat equation using flatness} Automatica J. IFAC, 50(12) (2014) pp. 3067–3076.

\bibitem{rosier2} P.~ Martin, L.~Rosier, P.~ Rouchon, \emph{Null controllability of one-dimensional parabolic equations by the flatness approach} SIAM J. Control Optim., 54(1), (2016) pp. 198–220.

\bibitem{mitrea1} M.~Mitrea, \emph{Boundary value problems and Hardy spaces associated to the Helmholtz equation in Lipschitz domains}  Journal of Mathematical Analysis and Applications 202 (3), 819-842

\bibitem{mitrea2} D.~ Mitrea, M.~ Mitrea, and M.~ Taylor \emph{Layer potentials, the Hodge Laplacian, and global boundary problems in nonsmooth Riemannian manifolds}  American Mathematical Society (2001). 

\bibitem{mu} A.~Munch and F.~Periago, \emph{Optimal distribution of the internal null control for 1D heat equation}, J. Diff. Equations 250 (2011), pp. 95 - 111. 
 
\bibitem{olver2010nist} F. W. J. Olver, D. W. Lozier, R. F. Boisvert, and C. W. Clark, \emph{Nist digital library of mathematical
functions}: http://dlmf. nist. gov, 2010. 

\bibitem{orsoni} M.A. Orsoni, \emph{Reachable states and holomorphic function spaces for the 1-D heat equation}, accepted for publication in Journal of Functional Analysis, 2021.
 

\bibitem{Verchota} G. Verchota, \emph{Layer Potentials and Boundary Value Problems for Laplace's Equations on Lipschitz Domains}, Ph.D. Thesis, University of Minnesota, (1982)

\bibitem{zelditch} S. Zelditch, \emph{Pluri-potential theory on Grauert tubes of real analytic Riemannian manifolds, I}
Spectral geometry 84, 299-339

\bibitem{zelditch2} S. Zelditch, \emph{Park City Lectures on Eigenfunctions} Geometric Analysis AMS 22 (2016).   

\bibitem{zelditch3} S. Zelditch, \emph{Complex zeros of real ergodic eigenfunctions}. Invent. Math. 167 2 (2007), pp. 419–443.

\bibitem{zuazua} E.~Zuazua, \emph{Finite dimensional null controllability for the semilinear heat equation} J. Math. Pures. Appl., 76 (1997) pp. 237-264. 


\end{thebibliography}
\end{document}